\newtheorem{theorem}{Theorem}[section]
\newtheorem{lemma}[theorem]{Lemma}
\theoremstyle{definition}
\newtheorem{remark}[theorem]{Remark}
\newtheorem{Conjecture}[theorem]{Conjecture}
\newtheorem{Question}[theorem]{Question}
\numberwithin{equation}{section}
\begin{document}

\date{}
\title[Zero-one laws in Diophantine approximation]
{Zero-one laws in simultaneous and multiplicative Diophantine
approximation}

\author{ Liangpan Li}

\address{Department of Mathematical Sciences,
Loughborough University, LE11 3TU, UK}
 \email{liliangpan@gmail.com}

\subjclass[2000]{11J13, 11J83}

\keywords{Zero-one law, metric Diophantine approximation, cross
fibering principle}

\date{}

\begin{abstract}
Answering two questions of Beresnevich and Velani, we develop
zero-one laws in both simultaneous and multiplicative Diophantine
approximation. Our proofs rely on a Cassels-Gallagher type theorem
as well as a higher-dimensional analogue of the cross fibering
principle of Beresnevich, Haynes and Velani.
\end{abstract}

\maketitle


\section{Introduction}

 Diophantine approximation is
the quantitative study of
 rational number approximation to real numbers, originating from the celebrated theorem of Dirichlet that for any
irrational number $\alpha$, there exist infinitely many
$(q,p)\in\mathbb{N}\times\mathbb{Z}$ satisfying
$|\alpha-\frac{p}{q}|<\frac{1}{q^2}$. Note also for each algebraic
number $\xi$ of degree $d\geq2$, an inequality of Liouville produces
a constant $c(\xi)>0$ such that for all
$(q,p)\in\mathbb{N}\times\mathbb{Z}$,
$|\xi-\frac{p}{q}|>\frac{c(\xi)}{q^d}$, and the Thue-Siegel-Roth
theorem (\cite{Roth}) further gives for any real number $\gamma>2$,
a constant $c=c(\xi,\gamma)>0$ such that for all
$(q,p)\in\mathbb{N}\times\mathbb{Z}$,
$|\xi-\frac{p}{q}|>\frac{c}{q^{\gamma}}$.

There is in flavour another type of questions in Diophantine
approximation, that is, to prove statements which are `almost
always' or `almost never' true. For example, given
$\psi:\mathbb{N}\rightarrow[0,\frac{1}{2})$ often referred to as an
approximation function, a long-standing conjecture of Duffin and
Schaeffer (\cite{DuffinSchaeffer}) says that
\[{\mathcal M}_1\big(\limsup_{q\rightarrow\infty}{\mathcal E}_q(\psi)\big)=1
\Longleftrightarrow \sum_{q\in\mathbb{N}}{\mathcal M}_1({\mathcal
E}_q(\psi))=\infty,\] where  ${\mathcal M}_{1}(\cdot)$ denotes the
one-dimensional Lebesgue measure,
\[{\mathcal E}_q(\psi)\triangleq\bigcup_{p=1 \atop
(p,q)=1}^q\big(\frac{p-\psi(q)}{q},\frac{p+\psi(q)}{q}\big).\]In
this paper we will study a specific question of such flavour, that
is, the so-called zero-one law in
 Diophantine approximation. Given an approximation function  $\psi$, Cassels (\cite{Cassels})
proved that the measure of the set of real numbers $x$ in
$\mathbb{I}\triangleq[0,1]$ for which $|qx-p|<\psi(q)$ holds for
infinitely many pairs of positive integers $(q,p)$ equals either 0
or 1, while Gallagher (\cite{Gallagher}) showed ${\mathcal
M}_1(\limsup_{q\rightarrow\infty}{\mathcal E}_q(\psi))\in\{0,1\}$.
The Cassels-Gallagher theorems called also zero-one laws have played
a fundamental role in the study of metric Diophantine approximation
(see e.g. \cite{Harman,Sprindzuk}).

One may naturally consider developing zero-one laws in simultaneous
 Diophantine approximation. Given
$\{\Psi_j:\mathbb{Z}^n\backslash\{\mathbf{0}\}\rightarrow\mathbb{R}^{+}\}_{j=1}^m$,
let ${\mathcal A}_{n,m}(\Psi_1,\ldots,\Psi_m)$, ${\mathcal
B}_{n,m}(\Psi_1,\ldots,$ $\Psi_m)$, ${\mathcal
C}_{n,m}(\Psi_1,\ldots,\Psi_m)$ denote the sets of
$(\mathbf{X}^{(1)},\ldots,$ $\mathbf{X}^{(m)})\in\mathbb{I}^{nm}$
for which
\begin{align}\label{formula 11}
|\mathbf{q}\mathbf{X}^{(j)}+p_j|<\Psi_j(\mathbf{q})\ \ \
(j=1,\ldots,m)
\end{align} holds for infinitely many pairs $\mathbf{q}\in
\mathbb{Z}^n\backslash\{\mathbf{0}\}$ and
$\mathbf{p}=(p_1,\ldots,p_m)\in\mathbb{Z}^m$ subject to respectively
1) free condition on $\mathbf{q}$ and $\mathbf{p}$; 2) local
coprimality condition on $\mathbf{q}$ and $p_j$ for each $j$; 3)
global coprimality condition on $\mathbf{q}$ and $\mathbf{p}$, where
$\mathbf{q}$ is regarded as a row while $\mathbf{X}^{(j)}$ a column,
two integer lattices (may be of different dimensions) are said to be
coprime if the greatest common divisor of all of their components is
1.  In the case $\Psi_1=\cdots=\Psi_m$, Beresnevich and Velani
(\cite[Theorem 1]{BV1}; see also \cite{Vilchinski}) proved that
${\mathcal M}_{nm}({\mathcal
F}_{n,m}(\Psi_1,\ldots,\Psi_1))\in\{0,1\}$, here and later on
$\mathcal F$ stands for any of $\mathcal A$, $\mathcal B$, $\mathcal
C$. They (\cite[Theorem 4]{BV1}) also showed in general
\begin{align}
\label{formula 12} {\mathcal
M}_{nm}\big(\bigcup_{k\in\mathbb{N}}{\mathcal
F}_{n,m}(k\Psi_1,\ldots,k\Psi_m)\big)\in\{0,1\},
\end{align}
and asked  among several others (\cite[Question 2]{BV1}) to prove
that
\begin{align}\label{formula 13}
{\mathcal M}_{nm}\big(\bigcup_{k\in\mathbb{N}}{\mathcal
F}_{n,m}(k\Psi_1,\ldots,k\Psi_m)\big)={\mathcal
M}_{nm}\big(\bigcap_{k\in\mathbb{N}}{\mathcal
F}_{n,m}(\frac{\Psi_1}{k},\ldots,\frac{\Psi_m}{k})\big).
\end{align}

We may also study zero-one laws in multiplicative Diophantine
approximation in a similar way. Given
$\Psi:\mathbb{Z}^n\backslash\{\mathbf{0}\}\rightarrow\mathbb{R}^{+}$,
let ${\mathcal A}_{n,m}^{\times}(\Psi)$, ${\mathcal
B}_{n,m}^{\times}(\Psi)$, ${\mathcal C}_{n,m}^{\times}(\Psi)$ denote
the sets of $(\mathbf{X}^{(1)},\ldots,$
$\mathbf{X}^{(m)})\in\mathbb{I}^{nm}$ for which
\begin{align}\label{formula 14}
\prod_{j=1}^m|\mathbf{q}\mathbf{X}^{(j)}+p_j|<\Psi(\mathbf{q})
\end{align} holds for infinitely many pairs $\mathbf{q}\in
\mathbb{Z}^n\backslash\{\mathbf{0}\}$ and
$\mathbf{p}=(p_1,\ldots,p_m)\in\mathbb{Z}^m$ subject to respectively
1) free condition on $\mathbf{q}$ and $\mathbf{p}$; 2) local
coprimality condition on $\mathbf{q}$ and $p_j$ for each $j$; 3)
global coprimality condition on $\mathbf{q}$ and $\mathbf{p}$. In
the case ${\mathcal G}\in\{{\mathcal A},{\mathcal B}\}$,
Beresnevich, Haynes and Velani (\cite[Theorem 1]{BHV}) obtained
 ${\mathcal M}_{nm}({\mathcal
G}_{n,m}^{\times}(\Psi))\in\{0,1\}$. Beresnevich and Velani
(\cite[Question 5]{BV1}) also asked to prove that
\begin{align}\label{formula 145}{\mathcal
M}_{nm}\big({\mathcal
A}_{n,m}^{\times}(\Psi)\backslash\bigcap_{k\in\mathbb{N}}{\mathcal
A}_{n,m}^{\times}(\frac{\Psi}{k})\big)=0.
\end{align}

 The main purpose of this paper is to show that
\begin{align}
\label{formula 15}  {\mathcal M}_{nm}({\mathcal
F}_{n,m}(k_1\Psi_1,\ldots,k_m\Psi_m))\in\{0,1\}\ \mbox{is
independent of}\ \{k_j>0\}_{j=1}^m, \end{align}
and
\begin{align}\label{formula 16} {\mathcal M}_{nm}({\mathcal
F}_{n,m}^{\times}(k\Psi))\in\{0,1\}\ \mbox{is independent of}\ k>0,
\end{align}
which result in (\ref{formula 13}), (\ref{formula 145}), and all
aforementioned zero-one laws as special cases.

We will prepare several preliminary facts
in Section
\ref{preliminaries}, and study zero-one laws in simultaneous and
multiplicative Diophantine approximation respectively in Section
\ref{n=1} and Section \ref{section 4}. A few remarks will be
addressed in the last section.

\section{Preliminaries}\label{preliminaries}

\subsection{Cassels-Gallagher theorem}

For each $\mathbf{q}\in\mathbb{Z}^n\backslash\{\mathbf{0}\}$, let
$\omega(\mathbf{q})$ be a fixed subset of divisors of
$\mbox{gcd}(\mathbf{q})$, and let
$M_{\mathbf{q}}:\omega(\mathbf{q})\rightarrow\mathbb{R}^{+}$ be a
fixed function. Let ${\mathcal H}_n(\omega,M)$ denote the set
\[\big\{\mathbf{X}\in\mathbb{I}^n:|\mathbf{q}\mathbf{X}+p|<
M_{\mathbf{q}}(\gamma)\ \mbox{for i.m.}\ (\mathbf{q},p)\
\mbox{with}\ p\in\mathbb{Z}\ \mbox{coprime to}\ \mbox{some}\
\gamma\in\omega({\mathbf{q}})\big\},\]
 where `i.m.'
means `infinitely many'.

\begin{lemma}\label{lemma 21} ${\mathcal M}_n({\mathcal
H}_n(\omega,kM))\in\{0,1\}$ is independent of $k>0$.
\end{lemma}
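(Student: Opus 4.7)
Lemma~\ref{lemma 21} is a mild generalization of the classical Cassels--Gallagher zero-one law in which the approximation bound is indexed by a choice of divisor $\gamma\in\omega(\mathbf{q})$ of $\gcd(\mathbf{q})$ and coprimality is imposed relative to $\gamma$. I would follow the two-step template of the classical proof: first establish the dichotomy for each fixed $k$, then show the dichotomy does not depend on $k$.

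Write $\mathcal{H}_n(\omega,kM)=\limsup_{(\mathbf{q},\gamma)} E_{\mathbf{q},\gamma}(k)$, where
\[
E_{\mathbf{q},\gamma}(k)=\bigcup_{\substack{p\in\mathbb{Z}\\ \gcd(p,\gamma)=1}}\bigl\{\mathbf{X}\in\mathbb{I}^n:\ |\mathbf{q}\mathbf{X}+p|<kM_{\mathbf{q}}(\gamma)\bigr\}.
\]
For the dichotomy at fixed $k$, I would argue that $\mathcal{H}_n(\omega,kM)$ is invariant modulo null sets under a dense subgroup of translations on the torus. Each event $E_{\mathbf{q},\gamma}(k)$ is invariant under $\mathbf{X}\mapsto\mathbf{X}+\mathbf{s}/\gamma$ for $\mathbf{s}\in\mathbb{Z}^n$ lying in the sublattice defined by $(\mathbf{q}/\gamma)\cdot\mathbf{s}\equiv 0\pmod{\gamma}$: this condition ensures that the induced shift $p\mapsto p-\mathbf{q}\mathbf{s}/\gamma$ is trivial modulo $\gamma$, hence permutes the coprime residues and preserves the union over $p$. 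Letting $\gamma$ grow along $\omega(\mathbf{q})$'s for large $|\mathbf{q}|$ produces arbitrarily fine translations fixing the tail event $\mathcal{H}_n(\omega,kM)$, and a Lebesgue density argument then forces the measure to lie in $\{0,1\}$.

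For the independence from $k$, monotonicity gives $\mathcal{H}_n(\omega,k_1M)\subseteq\mathcal{H}_n(\omega,k_2M)$ for $k_1\leq k_2$, so the measure as a function of $k$ is a nondecreasing $\{0,1\}$-valued map that can jump at most once. To rule out such a jump, I would apply Cassels's rescaling idea: assuming a positive-measure set of $\mathbf{X}$ admits infinitely many witnesses at scale $k_2$, each witness $(\mathbf{q},p,\gamma)$ can be refined into witnesses at the smaller scale $k_1$ by replacing $\mathbf{q}$ with a suitable integer multiple $j\mathbf{q}$ (so $\gamma\in\omega(j\mathbf{q})$) and $p$ by a representative in its residue class modulo $\gamma$ chosen to be coprime to $\gamma$; coupling this with a Fubini or Borel--Cantelli computation upgrades positive measure at scale $k_2$ to positive measure at scale $k_1$, which by the dichotomy forces it to be $1$.

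The main obstacle I anticipate is the coprimality bookkeeping. Natural rational translations and integer rescalings generically destroy the condition $\gcd(p,\gamma)=1$; the remedy in each case is to restrict the transformations to a sublattice or arithmetic progression that preserves coprimality modulo $\gamma$. One must then verify that these restricted families are still large enough, dense in $\mathbb{I}^n$ for the density-point step and sufficiently numerous for the rescaling step, to drive the argument through.
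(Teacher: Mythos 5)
There are genuine gaps in both halves of your plan. For the zero-one dichotomy at fixed $k$: the translation invariance you establish is only for a \emph{single} event $E_{\mathbf{q},\gamma}(k)$, under translations $\mathbf{X}\mapsto\mathbf{X}+\mathbf{s}/\gamma$ adapted to that particular pair $(\mathbf{q},\gamma)$. Such a translation moves all the other (infinitely many) events in the family, so it does not fix the limsup set $\mathcal{H}_n(\omega,kM)$ even modulo null sets; being a tail event only lets you discard finitely many terms, which does not help here. There is in general no common dense group of translations preserving $\mathcal{H}_n(\omega,kM)$, and indeed already in the classical cases (Cassels, Gallagher) the zero-one law is not obtained by translations but by the dilation maps $\mathbf{X}\mapsto s\mathbf{X}+\mathbf{e}/s \pmod 1$ of Lemma \ref{lemma 23}. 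These act uniformly on the whole family, but at the price of inflating the bound $M_{\mathbf{q}}(\gamma)$ by a factor $s$, which is why the paper proves the zero-one property only for the union $\bigcup_{k\in\mathbb{N}}\mathcal{H}_n(\omega,kM)$ (splitting into the sets $\mathcal{R}$, $\mathcal{S}$, $\mathcal{T}$ according to whether $s\nmid\gcd(\mathbf{q})$, $s\,\|\,\gcd(\mathbf{q})$, or $s^2\mid\gcd(\mathbf{q})$, precisely to preserve the coprimality constraint, and finishing the $\mathcal{T}$ case by periodicity and Lebesgue density). The fixed-$k$ dichotomy is then recovered from the independence-in-$k$ statement, not proved directly; your plan has no working substitute for this mechanism.

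For the independence in $k$: the ``rescaling'' step fails because $M$ is an arbitrary function of $\mathbf{q}$ (and of $\gamma\in\omega(\mathbf{q})$), with no structure such as $M_{\mathbf{q}}=\psi(|\mathbf{q}|)$ or monotonicity. Passing from $\mathbf{q}$ to $j\mathbf{q}$ gives you no control: $\omega(j\mathbf{q})$ is a prescribed set that need not contain $\gamma$, $M_{j\mathbf{q}}$ bears no relation to $M_{\mathbf{q}}$, and in any case $|j\mathbf{q}\mathbf{X}+jp|=j|\mathbf{q}\mathbf{X}+p|$ makes the quantity larger, not smaller, so a witness at scale $k_2$ is not refined into one at scale $k_1<k_2$. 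The paper's route is different and is the one that actually works: if $\limsup_{|\mathbf{q}|\to\infty}\max_{\gamma}M_{\mathbf{q}}(\gamma)/|\mathbf{q}|>0$, Gallagher's reduced-fraction lemma shows $\mathcal{H}_n(\omega,kM)=\mathbb{I}^n$ for every $k$; otherwise the sets $\{\mathbf{X}:|\mathbf{q}\mathbf{X}+p|<kM_{\mathbf{q}}(\gamma)\}$ are sandwiched (away from the boundary of $\mathbb{I}^n$) between unions of balls of comparable radii centred on the resonant hyperplanes $\mathbf{q}\mathbf{X}+p=0$, and Lemma \ref{lemma 22} (equal measure of limsups of comparably proportioned subsets of shrinking balls) yields that $\mathcal{M}_n(\mathcal{H}_n(\omega,kM))$ is the same for all $k>0$. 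Without something playing the role of Lemma \ref{lemma 22}, your ``jump at most once'' observation cannot be upgraded to independence.
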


The proof of Lemma \ref{lemma 21} needs the following two lemmas,
whose proofs and one-dimensional prototypes can be found in
\cite{BV1,Cassels,Gallagher}.

\begin{lemma}\label{lemma 22}
Let $\{B_i\}_{i\in\mathbb{N}}$ be a sequence of balls in
$\mathbb{R}^n$ with ${\mathcal M}_n(B_i)\rightarrow0$ as
$i\rightarrow\infty$. Let $\{U_i\}_{i\in\mathbb{N}}$ be a sequence
of measurable sets such that $U_i\subset B_i$ for all
$i\in\mathbb{N}$. Assume that for some constant $c>0$, ${\mathcal
M}_n(U_i)\geq c{\mathcal M}_n(B_i)$ for all $i\in\mathbb{N}$. Then
the upper limits of $\{B_i\}_{i\in\mathbb{N}}$ and
$\{U_i\}_{i\in\mathbb{N}}$ have the same measure.
\end{lemma}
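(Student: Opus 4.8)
The plan is to establish the only nontrivial direction, namely that at the level of measure $\limsup_{i\to\infty}U_i$ captures all of $\limsup_{i\to\infty}B_i$. Since $U_i\subseteq B_i$ gives $\limsup_{i\to\infty}U_i\subseteq\limsup_{i\to\infty}B_i$ for free, it remains to show
\[\mathcal{M}_n\Big(\limsup_{i\to\infty}B_i\setminus\limsup_{i\to\infty}U_i\Big)=0.\]
The first move is to decompose this difference set: putting $V_N\triangleq\limsup_{i\to\infty}B_i\setminus\bigcup_{i\geq N}U_i$ for $N\in\mathbb{N}$, one has $\limsup_{i\to\infty}B_i\setminus\limsup_{i\to\infty}U_i=\bigcup_{N\in\mathbb{N}}V_N$, so it is enough to prove $\mathcal{M}_n(V_N)=0$ for each fixed $N$. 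Each $V_N$ is measurable, being the difference of a Borel set and a countable union of measurable sets.

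I would then fix $N$ and argue by contradiction, assuming $\mathcal{M}_n(V_N)>0$. By the Lebesgue density theorem there is a density point $x$ of $V_N$, so that $\mathcal{M}_n(B(x,r)\setminus V_N)=o\big(\mathcal{M}_n(B(x,r))\big)$ as $r\to0$, where $B(x,r)$ denotes the ball of radius $r$ about $x$. Because $x\in V_N\subseteq\limsup_{i\to\infty}B_i$, there are infinitely many $i$ with $x\in B_i$; all but finitely many of these satisfy $i\geq N$, and their radii $\rho_i$ tend to $0$ since $\mathcal{M}_n(B_i)\to0$. For such an $i$ with $\rho_i$ small, $x\in B_i$ forces $B_i\subseteq B(x,2\rho_i)$, while $\mathcal{M}_n(B(x,2\rho_i))=2^n\mathcal{M}_n(B_i)$, so the density property gives $\mathcal{M}_n(B_i\setminus V_N)=o(\mathcal{M}_n(B_i))$, i.e. $\mathcal{M}_n(B_i\cap V_N)\geq(1-o(1))\mathcal{M}_n(B_i)$. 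Feeding this together with $U_i\subseteq B_i$ and $\mathcal{M}_n(U_i)\geq c\,\mathcal{M}_n(B_i)$ into inclusion-exclusion inside $B_i$ yields
\[\mathcal{M}_n(U_i\cap V_N)\geq\mathcal{M}_n(U_i)+\mathcal{M}_n(B_i\cap V_N)-\mathcal{M}_n(B_i)\geq\big(c-o(1)\big)\mathcal{M}_n(B_i),\]
which is strictly positive once $\rho_i$ is small enough. But $i\geq N$ means $U_i\subseteq\bigcup_{j\geq N}U_j$, whence $U_i\cap V_N=\emptyset$, a contradiction. Thus $\mathcal{M}_n(V_N)=0$ for every $N$, and the claim follows.

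This is the classical Cassels-Gallagher density computation, and I do not expect a serious obstacle; the one step that genuinely needs the right idea is the reduction to the sets $V_N$. One cannot argue directly with $\limsup_{i\to\infty}U_i$, since a point of $\limsup_{i\to\infty}B_i$ lying outside $\limsup_{i\to\infty}U_i$ may still belong to $U_i$ for many individual $i$; it is only after discarding all $U_j$ with $j\geq N$ that one obtains a positive-measure set disjoint from every relevant $U_i$, which is exactly what the final contradiction exploits. Everything else — the measurability of $V_N$, the scaling identity $\mathcal{M}_n(B(x,2\rho_i))=2^n\mathcal{M}_n(B_i)$, and the inclusion-exclusion estimate — is routine.
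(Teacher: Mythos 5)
Your proof is correct. Note that the paper does not actually supply a proof of this lemma: it simply cites \cite{BV1,Cassels,Gallagher} for it, so there is no in-text argument to compare against. That said, what you have written is precisely the standard Cassels--Gallagher density argument that those references employ, and every step holds up: the decomposition $\limsup_i B_i\setminus\limsup_i U_i=\bigcup_N V_N$ with $V_N=\limsup_i B_i\setminus\bigcup_{i\geq N}U_i$ is the right way to remove the ``infinitely often'' quantifier; choosing a density point $x\in V_N$ and an index $i\geq N$ with $x\in B_i$ and $\rho_i$ small enough gives $B_i\subseteq B(x,2\rho_i)$ and hence $\mathcal{M}_n(B_i\cap V_N)\geq(1-o(1))\mathcal{M}_n(B_i)$; and the inclusion--exclusion bound inside $B_i$ combined with $\mathcal{M}_n(U_i)\geq c\,\mathcal{M}_n(B_i)$ forces $U_i\cap V_N\neq\emptyset$, contradicting $V_N\cap U_i=\emptyset$ for $i\geq N$. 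No gaps.
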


\begin{lemma}\label{lemma 23}
For any integer $s\geq2$ and $\mathbf{e}\in\mathbb{Z}^n$ consider
the transformation of $\mathbb{I}^n$ into itself given by
\[T:\mathbf{X}\mapsto s\mathbf{X}+\frac{\mathbf{e}}{s}\ \ \ (\mbox{mod}\ 1).\]
Suppose $A\subset\mathbb{I}^n$ with $T(A)\subset A$. Then ${\mathcal
M}_n(A)=0$ or 1.
\end{lemma}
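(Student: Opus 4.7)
The plan is to prove the lemma in two steps, corresponding to the two auxiliary results: first use Lemma~\ref{lemma 22} to show that $\mathcal{M}_n(\mathcal{H}_n(\omega,kM))$ does not depend on $k>0$, then use Lemma~\ref{lemma 23} to show that this common value lies in $\{0,1\}$.

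For the independence part, fix $0<k_1<k_2$; the inclusion $\mathcal{H}_n(\omega,k_1 M)\subset\mathcal{H}_n(\omega,k_2 M)$ is obvious, so only the equality of measures needs justification. Each set is the upper limit, over admissible triples $(\mathbf{q},p,\gamma)$, of the slabs
\[
S_{\mathbf{q},p,\gamma}(k):=\{\mathbf{X}\in\mathbb{I}^n:|\mathbf{q}\mathbf{X}+p|<kM_{\mathbf{q}}(\gamma)\}.
\]
I would cover each slab $S_{\mathbf{q},p,\gamma}(k_2)$ by a family of small axis-aligned cubes $B_i$ whose side length is comparable to the slab's thickness, so that each cube satisfies $\mathcal{M}_n(S_{\mathbf{q},p,\gamma}(k_1)\cap B_i)\ge(k_1/k_2)\mathcal{M}_n(B_i)$. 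Applying Lemma~\ref{lemma 22} to the resulting global collection (cubes may be replaced by inscribed balls up to a fixed constant) then yields equality of the two upper-limit measures.

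For the zero-one conclusion, I would apply Lemma~\ref{lemma 23} with the transformation $T_s(\mathbf{X})=s\mathbf{X}\pmod 1$ for some integer $s\ge 2$. The key calculation is that if $\mathbf{X}$ enjoys a good approximation $|\mathbf{q}\mathbf{X}+p|<M_{\mathbf{q}}(\gamma)$, then writing $s\mathbf{X}=T_s(\mathbf{X})+\mathbf{v}$ with $\mathbf{v}\in\mathbb{Z}^n$ gives $|\mathbf{q}T_s(\mathbf{X})+p'|<sM_{\mathbf{q}}(\gamma)$ with $p'=sp+\mathbf{q}\mathbf{v}$, and $\gcd(p',\gamma)=\gcd(sp,\gamma)$ since $\gamma\mid\mathbf{q}\mathbf{v}$. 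Hence whenever $\gcd(s,\gamma)=1$ the shifted triple remains admissible, giving $T_s(\mathbf{X})\in\mathcal{H}_n(\omega,sM)$. Combined with the independence just proved, $\mathcal{H}_n(\omega,sM)$ agrees with $\mathcal{H}_n(\omega,M)$ up to a null set, so one obtains $T_s$-invariance of $\mathcal{H}_n(\omega,M)$ modulo measure zero, after which Lemma~\ref{lemma 23} forces $\mathcal{M}_n(\mathcal{H}_n(\omega,M))\in\{0,1\}$.

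The main obstacle I foresee is the coprimality bookkeeping in the last step. A single $\mathbf{X}$ may admit approximations whose $\gamma_i$'s have wildly varying prime factorizations, so no universal choice of $s$ ensures $\gcd(s,\gamma_i)=1$ along a subsequence. The standard remedy is to decompose $\mathcal{H}_n(\omega,M)$ as a countable union of sets $\mathcal{H}_n^{(s)}$ indexed by primes $s$, where $\mathbf{X}\in\mathcal{H}_n^{(s)}$ if infinitely many of its good approximations satisfy $\gcd(s,\gamma)=1$, and to verify that this union exhausts $\mathcal{H}_n(\omega,M)$ up to a null set. Each piece is $T_s$-invariant modulo measure zero, so Lemma~\ref{lemma 23} gives the zero-one dichotomy on each piece, and the conclusion for $\mathcal{H}_n(\omega,M)$ follows.
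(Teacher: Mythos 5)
Your proposal does not actually address the statement at hand. The statement to be proved is Lemma~\ref{lemma 23} itself: the zero--one law for sets $A\subset\mathbb{I}^n$ that are forward invariant under the affine expanding map $T:\mathbf{X}\mapsto s\mathbf{X}+\frac{\mathbf{e}}{s}\pmod 1$. What you have written is instead a sketch of Lemma~\ref{lemma 21} (the Cassels--Gallagher type theorem for ${\mathcal H}_n(\omega,kM)$), and in the course of it you invoke Lemma~\ref{lemma 23} as a known tool. As a proof of Lemma~\ref{lemma 23} this is circular, and as written it leaves the actual content of that lemma --- why $T(A)\subset A$ forces ${\mathcal M}_n(A)\in\{0,1\}$ --- entirely untouched. (For the record, the paper does not reprove this lemma either; it is quoted from Cassels, Gallagher and Beresnevich--Velani, where the one-dimensional prototype is established.)

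A genuine proof of Lemma~\ref{lemma 23} would have to argue along the following lines: $T$ is a measure-preserving endomorphism of the torus (each point has $s^n$ preimages, each a translate of the others by vectors in $\frac{1}{s}\mathbb{Z}^n$), so from $T(A)\subset A$ one gets $A\subset T^{-1}(A)$ and hence, by equality of measures, $A=T^{-1}(A)$ up to a null set; one then concludes either by ergodicity of $T$ (the linear part $\mathbf{X}\mapsto s\mathbf{X}$ is an expanding toral endomorphism, and the translation by $\mathbf{e}/s$ does not affect ergodicity), or, as in Cassels' and Gallagher's original arguments, by the Lebesgue density theorem: if ${\mathcal M}_n(A)>0$, pick a density point of $A$, note that $T^{-\nu}(A)=A$ (mod null) consists of $s^{\nu n}$ shrunken copies of $A$ spread $\frac{1}{s^{\nu}}$-periodically through $\mathbb{I}^n$, and deduce that $A$ has density $1$ at almost every point, i.e.\ full measure. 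None of this appears in your proposal, so the statement remains unproved. Separately, even regarded as a sketch of Lemma~\ref{lemma 21}, your final step is incomplete exactly where the paper works hardest: your sets ${\mathcal H}_n^{(s)}$ (infinitely many admissible $\gamma$ coprime to $s$) need not exhaust ${\mathcal H}_n(\omega,M)$, since the $\gamma$'s along the sequence may eventually be divisible by any fixed prime; the paper handles this residual case with the sets ${\mathcal T}(s^{\nu})$, the $\frac{1}{s}$-periodicity in each variable, and the Lebesgue density theorem, and it is precisely to treat the intermediate case $s\,|\,\mbox{gcd}(\mathbf{q})$, $s^2\nmid q_i$ that the translation term $\frac{\mathbf{e}}{s}$ in Lemma~\ref{lemma 23} is needed at all.
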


\begin{proof}[Proof of the independence part of Lemma \ref{lemma
21}] By letting $|\cdot|$ denote the supremum norm in
$\mathbb{R}^n$, we have two cases to consider.

\textsc{Case 1}: Suppose $\limsup_{|\mathbf{q}|\rightarrow\infty}
\frac{\displaystyle\max_{\gamma\in\omega(\mathbf{q})}M_{\mathbf{q}}(\gamma)}{\displaystyle|\mathbf{q}|}>0$.
In this case there exist a $\delta>0$ and
$\{\gamma^{(i)}\in\omega(\mathbf{q}^{(i)})\}_{i\in\mathbb{N}}$ with
$|\mathbf{q}^{(i)}|\rightarrow\infty$ as $i\rightarrow\infty$ such
that
\[\frac{\displaystyle M_{\mathbf{q}^{(i)}}(\gamma^{(i)})}{\displaystyle|\mathbf{q}^{(i)}|}\geq\delta\ \ \ (\forall i\in\mathbb{N}).\]
Write $\mathbf{q}^{(i)}=(q^{(i)}_1,\ldots,q^{(i)}_n)$ and assume
without loss of generality that $|\mathbf{q}^{(i)}|=|{q}_1^{(i)}|$
for all $i\in\mathbb{N}$. From Gallagher's proof (\cite[Lemma
1]{Gallagher}) we know that if $|q|$ is large enough, then for any
$\alpha\in\mathbb{R}$,
\[\{x_1\in\mathbb{I}:|x_1+\frac{\alpha+p}{q}|<k\delta\ \mbox{for some}\
p\in\mathbb{Z}\ \mbox{with}\ (p,q)=1\}=\mathbb{I}.\] Thus for any
$x_2,\ldots,x_n\in\mathbb{I}$,
\[\{x_1\in\mathbb{I}:\big|x_1+\frac{\sum_{j=2}^nq_j^{(i)}x_j+p}{\displaystyle q_1^{(i)}}\big|
<\frac{\displaystyle k
M_{\mathbf{q}^{(i)}}(\gamma^{(i)})}{\displaystyle |{q}_1^{(i)}|}\
\mbox{for some}\ p\in\mathbb{Z}\ \mbox{with}\
(p,\gamma^{(i)})=1\}=\mathbb{I}\]  provided $|q_1^{(i)}|$ is large
enough. By definition, ${\mathcal H}_n(\omega,kM)=\mathbb{I}^n$.

\textsc{Case 2}: Suppose $\lim_{|\mathbf{q}|\rightarrow\infty}
\frac{\displaystyle\max_{\gamma\in\omega(\mathbf{q})}M_{\mathbf{q}}(\gamma)}{\displaystyle|\mathbf{q}|}=0$.
For any triple
$(\mathbf{q},p,\gamma)\in(\mathbb{Z}^n\backslash\{\mathbf{0}\})
\times\mathbb{Z}\times(\mathbb{Z}\backslash\{0\})$ with
$N(\mathbf{q},p)\triangleq\{\mathbf{X}\in\mathbb{I}^n:\mathbf{q}\mathbf{X}+p=0\}$
being non-empty, $\gamma\in\omega(\mathbf{q})$, $(\gamma,p)=1$ and
$M_\mathbf{q}(\gamma)>0$, we fix a finite subset
$N(\mathbf{q},p,\gamma)$ of $N(\mathbf{q},p)$ such that for any
$\mathbf{X}\in N(\mathbf{q},p)$ there is a $\mathbf{Y}\in
N(\mathbf{q},p,\gamma)$ satisfying
$|\mathbf{X}-\mathbf{Y}|<\frac{M_{\mathbf{q}}(\gamma)}{|\mathbf{q}|}$.
All other triples $(\mathbf{q},p,\gamma)$ will be completely ignored
since they are useless to our study. For any $k>0$ we define
\[{\mathcal H}_{\mathbf{q},p,\gamma}(k)=\bigcup_{\mathbf{Y}\in N(\mathbf{q},p,\gamma)}B(\mathbf{Y},k\frac{M_{\mathbf{q}}(\gamma)}{|\mathbf{q}|})\]
and
\[{\mathcal H}_{\mathbf{q}}(k)=\bigcup_{p}\bigcup_{\gamma}{\mathcal H}_{\mathbf{q},p,\gamma}(k),\]
where $B(\mathbf{Y},r)$ denotes the ball in $\mathbb{R}^n$ with
center $\mathbf{Y}$ and radius $r$.  Then
$\forall\epsilon\in(0,\frac{1}{2})$,  it is easy to verify that for
any sufficient large $|\mathbf{q}|$,
\[[\epsilon,1-\epsilon]^n\cap {\mathcal H}_{\mathbf{q},p,\gamma}(\frac{k}{n})\subset\{\mathbf{X}\in\mathbb{I}^n:|\mathbf{q}\mathbf{X}+p|<
kM_{\mathbf{q}}(\gamma)\},\]
\[[\epsilon,1-\epsilon]^n\cap \{\mathbf{X}\in\mathbb{I}^n:|\mathbf{q}\mathbf{X}+p|<
kM_{\mathbf{q}}(\gamma)\}\subset {\mathcal
H}_{\mathbf{q},p,\gamma}(k+\sqrt{n}).\] These facts naturally imply
that
\[{\mathcal M}_n(\limsup_{|\mathbf{q}|\rightarrow\infty}{\mathcal
H}_{\mathbf{q}}(\frac{k}{n}))\leq{\mathcal M}_n({\mathcal
H}_n(\omega,kM))\leq{\mathcal
M}_n(\limsup_{|\mathbf{q}|\rightarrow\infty}{\mathcal
H}_{\mathbf{q}}(k+\sqrt{n})),\] and consequently by Lemma \ref{lemma
22}, ${\mathcal M}_n({\mathcal H}_n(\omega,kM))$ is independent of
$k>0$.
\end{proof}

\begin{proof}[Proof of the zero-one part of Lemma \ref{lemma 21}]
For each prime number $s$ and each non-negative integer $\nu$, we
consider the approximation
\begin{align}\label{formula 21}
|\mathbf{q}\mathbf{X}+p|<
 s^{\nu}M_{\mathbf{q}}(\gamma)\ \ \ \mbox{for some coprime pair}\ \gamma\in\omega(\mathbf{q})\ \mbox{and}\
 p\in\mathbb{Z}
 \end{align}
and define three non-decreasing sequences of sets ${\mathcal
R}(s^{\nu})$, ${\mathcal S}(s^{\nu})$ and  ${\mathcal T}(s^{\nu})$
as follows:
\begin{align*}
&\mathbf{X}\in{\mathcal R}(s^{\nu})\ \mbox{if}\ \mathbf{X}\
\mbox{satisfies}\ (\ref{formula 21})\ \mbox{for i.m.}\ \mathbf{q}\
\mbox{with}\ s\nmid \mbox{gcd}(\mathbf{q}),  \\
&\mathbf{X}\in{\mathcal S}(s^{\nu})\, \, \,  \mbox{if}\ \mathbf{X}\
\mbox{satisfies}\ (\ref{formula 21})\ \mbox{for i.m.}\ \mathbf{q}\
\mbox{with}\ s|\mbox{gcd}(\mathbf{q})\ \&\ s^2\nmid\mbox{gcd}(\mathbf{q}),\\
&\mathbf{X}\in{\mathcal T}(s^{\nu})\ \mbox{if}\ \mathbf{X}\
\mbox{satisfies}\ (\ref{formula 21})\ \mbox{for i.m.}\ \mathbf{q}\
\mbox{with}\ s^2|\mbox{gcd}(\mathbf{q}).
\end{align*}

If $\mathbf{X}$ satisfies (\ref{formula 21}) with $s\nmid
\mbox{gcd}(\mathbf{q})$, then
\[|\mathbf{q}\langle s\mathbf{X}\rangle+\mathbf{q}\lfloor s\mathbf{X}\rfloor+sp|<s^{\nu+1}M_{\mathbf{q}}(\gamma)\ \
(\gamma,\mathbf{q}\lfloor s\mathbf{X}\rfloor+sp)=1,\] where as usual
$\langle(x_1,\ldots,x_n)\rangle\triangleq(\langle
x_1\rangle,\ldots,\langle x_n\rangle)$,
$\lfloor(x_1,\ldots,x_n)\rfloor\triangleq(\lfloor
x_1\rfloor,\ldots,\lfloor x_n\rfloor)$, $\langle\alpha\rangle$ and
$\lfloor\alpha\rfloor$ denote respectively the fractional and
integer parts of $\alpha\in\mathbb{R}$. (Reason: We argue by
contradiction and suppose there exists a prime number $z$ such that
$z|\gamma$, $z|\mathbf{q}\lfloor s\mathbf{X}\rfloor+sp$. Since
$\gamma$ is a divisor of $\mbox{gcd}(\mathbf{q})$, $z|sp$, which by
the primality of $s,z$ gives either $z=s$ or $z|p$. In the first
case, we have $s=z|\gamma|\mbox{gcd}(\mathbf{q})$, a contradiction.
In the second one we have $z|(\gamma,p)=1$, also a contradiction.)
This shows the map $\mathbf{X}\mapsto\langle s\mathbf{X}\rangle$
sends $\cup_{\nu\geq0}{\mathcal R}(s^{\nu})$ into itself. By Lemma
\ref{lemma 23}, ${\mathcal M}_n(\cup_{\nu\geq0}{\mathcal
R}(s^{\nu}))\in\{0,1\}$.

If $\mathbf{X}$ satisfies (\ref{formula 21}) with $s|
\mbox{gcd}(\mathbf{q})$ and $s^2\nmid q_i$ for some
$i\in\{1,\ldots,n\}$, then
\[|\mathbf{q}\langle s\mathbf{X}+\frac{\mathbf{1}_i}{s}\rangle+\mathbf{q}\lfloor s\mathbf{X}+\frac{\mathbf{1}_i}{s}\rfloor
-\frac{q_i}{s}+sp| <s^{\nu+1}M_{\mathbf{q}}(\gamma)\ \
 (\gamma,\mathbf{q}\lfloor s\mathbf{X}+\frac{\mathbf{1}_i}{s}\rfloor-\frac{q_i}{s}+sp)=1,\]
 where $\mathbf{1}_i$ denotes the element of $\mathbb{Z}^n$ with zero entries everywhere except in the
 $i$-th position where the entry is 1.
(Reason: We argue by contradiction and suppose there exists a prime
number $z$ such that $z|\gamma$, $z|\mathbf{q}\lfloor
s\mathbf{X}+\frac{\mathbf{1}_i}{s}\rfloor-\frac{q_i}{s}+sp$. Since
$\gamma$ is a divisor of $\mbox{gcd}(\mathbf{q})$,
$z|sp-\frac{q_i}{s}$. Note $z|s\frac{q_i}{s}$, which by the
primality of $s,z$ gives either $z=s$ or $z|\frac{q_i}{s}$. In the
first case, we have $s|\frac{q_i}{s}$, a contradiction. In the
second case we have $z|sp$. Now there are two subcases to consider,
one is $z=s$, the other is $z|p$. If the first subcase happens, then
$s|\frac{q_i}{s}$, a contradiction; if the second one happens, then
$z|(\gamma,p)=1$, also a contradiction.) This shows  the map
$\mathbf{X}\mapsto\langle s\mathbf{X}+\frac{\mathbf{1}_i}{s}\rangle$
sends $\cup_{\nu\geq0}{\mathcal S}^{(i)}(s^{\nu})$ into itself,
where
\[\mathbf{X}\in{\mathcal S}^{(i)}(s^{\nu})\ \mbox{if}\ \mathbf{X}\
\mbox{satisfies}\ (\ref{formula 21})\ \mbox{for i.m.}\ \mathbf{q}\
\mbox{with}\ s|\mbox{gcd}(\mathbf{q})\ \&\ s^2\nmid q_i.\] By Lemma
\ref{lemma 23}, ${\mathcal M}_n(\cup_{\nu\geq0}{\mathcal
S}^{(i)}(s^{\nu}))\in\{0,1\}$. Note  ${\mathcal
S}(s^{\nu})=\cup_{i=1}^n{\mathcal S}^{(i)}(s^{\nu})$, which easily
implies that ${\mathcal M}_n(\cup_{\nu\geq0}{\mathcal
S}(s^{\nu}))\in\{0,1\}$.

If $\mathbf{X}$ satisfies (\ref{formula 21}) with
$s^2|\mbox{gcd}(\mathbf{q})$, then for each $i\in\{1,\ldots,n\}$,
\[|\mathbf{q}\langle \mathbf{X}+\frac{\mathbf{1}_i}{s}\rangle+\mathbf{q}\lfloor \mathbf{X}+\frac{\mathbf{1}_i}{s}\rfloor-\frac{q_i}{s}+p|
<s^{\nu}M_{\mathbf{q}}(\gamma)\ \
 (\gamma,\mathbf{q}\lfloor \mathbf{X}+\frac{\mathbf{1}_i}{s}\rfloor-\frac{q_i}{s}+p)=1.\]
(Reason: We argue by contradiction and suppose there exists a prime
number $z$ such that $z|\gamma$, $z|\mathbf{q}\lfloor
\mathbf{X}+\frac{\mathbf{1}_i}{s}\rfloor-\frac{q_i}{s}+p$. Since
$\gamma$ is a divisor of $\mbox{gcd}(\mathbf{q})$,
$z|p-\frac{q_i}{s}$. Note $z|s\frac{q_i}{s}$, which by the primality
of $s,z$ gives either $z=s$ or $z|\frac{q_i}{s}$. In the first case
since $s|\frac{q_i}{s}$ we have $z|p$, while in the second one we
can also have $z|p$. So no matter which case happens, $z|p$, and
consequently, $z|(\gamma,p)=1$, a contradiction.) This shows
 ${\mathcal T}(s^{\nu})$ has period
$\frac{1}{s}$ in each variable, so is $\cup_{\nu\geq0}{\mathcal
T}(s^{\nu})$.

If either ${\mathcal M}_n(\cup_{\nu\geq0}{\mathcal R}(s^{\nu}))=1$
or ${\mathcal M}_n(\cup_{\nu\geq0}{\mathcal S}(s^{\nu}))=1$, then it
is easy to see that ${\mathcal M}_n(\cup_{k\in\mathbb{N}}{\mathcal
H}_n(\omega,kM))=1$. Else we assume ${\mathcal
M}_n(\cup_{\nu\geq0}{\mathcal R}(s^{\nu}))={\mathcal
M}_n(\cup_{\nu\geq0}{\mathcal S}(s^{\nu}))=0$. In this case we
observe that for any prime number $s$,
$\cup_{k\in\mathbb{N}}{\mathcal H}_n(\omega,kM)$ differs from
$\cup_{\nu\geq0}{\mathcal T}(s^{\nu})$ by a null set. Roughly
speaking, $\cup_{k\in\mathbb{N}}{\mathcal H}_n(\omega,kM)$ is a
periodic set of sufficiently small period uniformly in every
variable. Thus a standard application of the Lebesgue density
theorem gives ${\mathcal M}_n(\cup_{k\in\mathbb{N}}{\mathcal
H}_n(\omega,kM))\in\{0,1\}$ (see e.g. \cite{BV1,Gallagher}). This
finishes the whole proof.
\end{proof}

\begin{remark}
Lemma \ref{lemma 21} reduces to the Cassels theorem (\cite[Theorem
VIII]{Cassels}) and the Gallagher theorem (\cite[Theorem
1]{Gallagher}) if choosing $\omega(q)=\{1\}$ and $\{q\}$
respectively.
\end{remark}

\subsection{Cross fibering principle}\label{section 2}

The cross fibering principle of Beresnevich, Haynes and Velani
(\cite[Theorem 3]{BHV}) provides an elegant way to verify  a set or
its complement in a product measure space of two $\sigma$-finite
ones is null. In the following we will develop a higher-dimensional
analogue. To help appreciate its proof, we first state and prove it
in its simplest form as follows:

\begin{theorem}\label{theorem 21}
Let $A\subset\mathbb{I}^n$ be a measurable set such that for any
line $L$ parallel to one of the coordinate axes, ${\mathcal
M}_{1}(A\cap L)=0$ or $1$. Then ${\mathcal M}_{n}(A)=0$ or $1$.
\end{theorem}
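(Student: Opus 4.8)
The plan is, for each coordinate direction in turn, to identify $A$ (modulo $\mathcal{M}_n$-null sets) with a full cylinder in that direction, and then to observe that a set which is simultaneously, up to null sets, a cylinder in \emph{every} coordinate direction must have measure $0$ or $1$.

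Assume $n\ge 2$ (the case $n=1$ being trivial, since then $A\cap\mathbb{I}=A$). Fix $i\in\{1,\dots,n\}$ and, relabelling coordinates so that $i$ is the last one, write $\mathbb{I}^n=\mathbb{I}^{n-1}\times\mathbb{I}$; for $\mathbf{z}\in\mathbb{I}^{n-1}$ let $A_{\mathbf{z}}:=\{t\in\mathbb{I}:(\mathbf{z},t)\in A\}$ be the fibre of $A$ above $\mathbf{z}$. By Fubini's theorem $A_{\mathbf{z}}$ is measurable for $\mathcal{M}_{n-1}$-almost every $\mathbf{z}$, the map $\mathbf{z}\mapsto\mathcal{M}_1(A_{\mathbf{z}})$ is measurable, and since each such fibre is the intersection of $A$ with a line parallel to the $i$-th axis, the hypothesis gives $\mathcal{M}_1(A_{\mathbf{z}})\in\{0,1\}$ for almost every $\mathbf{z}$. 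Set $B_i:=\{\mathbf{z}\in\mathbb{I}^{n-1}:\mathcal{M}_1(A_{\mathbf{z}})=1\}$, a measurable subset of $\mathbb{I}^{n-1}$. Integrating over the fibres — the fibre being co-null in $\mathbb{I}$ when $\mathbf{z}\in B_i$ and null when $\mathbf{z}\notin B_i$ (this is the single place where the full dichotomy "$0$ or $1$" is used, to rule out intermediate fibre measures) — I get that $A$ differs from $B_i\times\mathbb{I}$ by an $\mathcal{M}_n$-null set, and in particular $\mathcal{M}_n(A)=\mathcal{M}_{n-1}(B_i)$. Equivalently, $\mathbf{1}_A$ agrees $\mathcal{M}_n$-a.e. with a function not depending on the $i$-th coordinate, and this holds simultaneously for every $i\in\{1,\dots,n\}$.

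It then remains to show that a measurable function on $\mathbb{I}^n$ which, for each $i$, agrees a.e. with a function independent of the $i$-th variable must be a.e. constant. The cleanest route is via Fourier coefficients on the torus: for $\mathbf{k}\in\mathbb{Z}^n$ with $k_i\neq0$, writing $\mathbf{1}_A=g_i$ a.e. with $g_i$ independent of the $i$-th variable and performing the integration in that variable first, the Fourier coefficient $\widehat{\mathbf{1}_A}(\mathbf{k})=\widehat{g_i}(\mathbf{k})$ acquires the factor $\int_0^1 e^{-2\pi\mathrm{i}k_i t}\,dt=0$; hence every Fourier coefficient of $\mathbf{1}_A$ with non-zero index vanishes, so $\mathbf{1}_A$ equals its mean $\mathcal{M}_n(A)$ almost everywhere, and being $\{0,1\}$-valued this forces $\mathcal{M}_n(A)\in\{0,1\}$. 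Alternatively, one can finish by induction on $n$: from the identity $A=B_n\times\mathbb{I}$ (up to null sets) one checks, applying the already-established $n=2$ case of the theorem to the two-dimensional coordinate sections of $A$, that $B_n\subset\mathbb{I}^{n-1}$ again satisfies the hypothesis of the theorem, whence $\mathcal{M}_{n-1}(B_n)\in\{0,1\}$ and therefore $\mathcal{M}_n(A)\in\{0,1\}$.

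The heart of the matter is the structural identity "$A=B_i\times\mathbb{I}$ up to null sets for every $i$"; once that is in hand the conclusion is soft. Two minor points call for care. First, a merely measurable $A$ can have non-measurable sections along certain exceptional lines, but Fubini supplies measurable sections above $\mathcal{M}_{n-1}$-almost every base point, which is all the argument uses; in fact even the weaker assumption "$\mathcal{M}_1(A\cap L)\in\{0,1\}$ for almost every axis-parallel line $L$" would suffice. Second, in the Fourier step one must remember that $\mathbf{1}_A$ is only a.e. equal to a function independent of the $i$-th variable and not literally independent of it, so the coefficient computation has to be carried out on the representative $g_i$ rather than on $\mathbf{1}_A$ itself.
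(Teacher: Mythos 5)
Your proof is correct, and it takes a genuinely different route from the paper's. The paper argues by induction on $n$: it splits $\mathbb{I}^n=\mathbb{I}\times\mathbb{I}^{n-1}$, uses the induction hypothesis to get ${\mathcal M}_{n-1}(A_x)\in\{0,1\}$ for the $(n-1)$-dimensional slices and the hypothesis directly for the $1$-dimensional slices $A^y$, observes via Fubini that ${\mathcal M}_n(A)={\mathcal M}_1(X_2)={\mathcal M}_{n-1}(Y_2)$ where $X_2,Y_2$ are the full-slice bases, and then extracts the inequality ${\mathcal M}_n(A)\le{\mathcal M}_n(A\cap(X_2\times Y_2))\le{\mathcal M}_1(X_2)\,{\mathcal M}_{n-1}(Y_2)={\mathcal M}_n(A)^2$, forcing ${\mathcal M}_n(A)\in\{0,1\}$. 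You instead fiber in \emph{all} $n$ coordinate directions at once, show that $A$ equals a cylinder $B_i\times\mathbb{I}$ up to a null set for each $i$, and then conclude by the vanishing of all non-trivial Fourier coefficients of $\mathbf{1}_A$; this avoids the induction entirely and makes the structural content explicit (that the indicator function is a.e.\ independent of every coordinate). The paper's inequality trick is shorter and self-contained in measure theory; your Fourier route is arguably more transparent about why the conclusion holds and isolates a reusable general fact. Your backup ``alternative inductive route'' is morally closer to the paper's argument, applying the two-dimensional case to coordinate planes to propagate the hypothesis down to $B_n$, and is also correct, though stated a bit loosely; the paper's version of this induction is tighter in that it never needs to verify the hypothesis for the derived set $B_n$, instead working directly with $X_2$ and $Y_2$.
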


\begin{proof} We argue by induction. Since  there is nothing to do in the $n=1$ case,
 we may assume $n\geq2$ and Theorem \ref{theorem 21} is true for all dimensions less than $n$. For any $x\in\mathbb{I}$
and $y\in\mathbb{I}^{n-1}$, define as usual the sections of $A$
through $x$ and $y$ respectively by
\begin{align*}
A_x&=\{y\in \mathbb{I}^{n-1}:(x,y)\in A\},
\\A^y&=\{x\in\mathbb{I}:(x,y)\in A\}.
\end{align*}
In view of the induction hypothesis, ${\mathcal M}_{n-1}(A_x)=0$ or
$1$, ${\mathcal M}_{1}(A^y)=0$ or $1$. Define
\begin{align*}
X_1&=\{x\in\mathbb{I}:{\mathcal M}_{n-1}(A_x)=0\},\\
X_2&=\{x\in\mathbb{I}:{\mathcal M}_{n-1}(A_x)=1\},\\
Y_1&=\{y\in\mathbb{I}^{n-1}:{\mathcal M}_{1}(A^y)=0\},\\
Y_2&=\{y\in\mathbb{I}^{n-1}:{\mathcal M}_{1}(A^y)=1\}.
\end{align*}
By Fubini's theorem, ${\mathcal M}_n(A)={\mathcal
M}_1(X_2)={\mathcal M}_{n-1}(Y_2)$. Since $A=\cup_{i,j} (A\cap
(X_i\times Y_j))$,
\begin{align*} {\mathcal M}_n(A)&\leq\sum_{i,j}{\mathcal
M}_n(A\cap (X_i\times Y_j)) ={\mathcal M}_n(A\cap (X_2\times
Y_2))\\&\leq {\mathcal M}_1(X_2)\cdot{\mathcal
M}_{n-1}(Y_2)={\mathcal M}_n(A)^2,
\end{align*} which easily implies
${\mathcal M}_{n}(A)=0$ or $1$. This finishes the proof.
\end{proof}

The next theorem is a straightforward generalization of Theorem
\ref{theorem 21}, whose proof is pretty much the same as that of the
previous one with obvious modification.

\begin{theorem}\label{theorem 22}
View $\mathbb{I}^{nm}$ as the product of $m$ coordinate planes
$\mathbb{I}^n$. Let $A\subset\mathbb{I}^{nm}$ be a measurable set
such that for any $n$-dimensional plane $\Pi$ parallel to one of the
coordinate planes, ${\mathcal M}_{n}(A\cap \Pi)=0$ or $1$. Then
${\mathcal M}_{nm}(A)=0$ or $1$.
\end{theorem}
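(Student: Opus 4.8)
The plan is to mimic the proof of Theorem \ref{theorem 21} almost verbatim, carrying out the same induction but now on the number $m$ of coordinate $n$-planes rather than on the individual coordinates. Since the case $m=1$ is the hypothesis itself (the only $n$-plane parallel to the single coordinate plane is $\mathbb{I}^n$ itself), I would assume $m\geq 2$ and that the statement holds for all smaller numbers of factors. Writing $\mathbb{I}^{nm}=\mathbb{I}^n\times\mathbb{I}^{n(m-1)}$, for $\mathbf{x}\in\mathbb{I}^n$ and $\mathbf{y}\in\mathbb{I}^{n(m-1)}$ I would introduce the sections $A_{\mathbf{x}}=\{\mathbf{y}\in\mathbb{I}^{n(m-1)}:(\mathbf{x},\mathbf{y})\in A\}$ and $A^{\mathbf{y}}=\{\mathbf{x}\in\mathbb{I}^n:(\mathbf{x},\mathbf{y})\in A\}$.

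Next I would observe that each section $A_{\mathbf{x}}$ is a subset of $\mathbb{I}^{n(m-1)}$ which, for any $n$-plane $\Pi'$ in $\mathbb{I}^{n(m-1)}$ parallel to one of its $m-1$ coordinate planes, has the property that $A_{\mathbf{x}}\cap\Pi'$ is a section of $A$ cut by an $n$-plane parallel to a coordinate plane of $\mathbb{I}^{nm}$, hence has measure $0$ or $1$ by hypothesis; so by the induction hypothesis ${\mathcal M}_{n(m-1)}(A_{\mathbf{x}})\in\{0,1\}$. Similarly $A^{\mathbf{y}}=A\cap\Pi$ for the $n$-plane $\Pi$ parallel to the first coordinate plane through $\mathbf{y}$, so ${\mathcal M}_n(A^{\mathbf{y}})\in\{0,1\}$ directly from the hypothesis. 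I then set $X_1=\{\mathbf{x}:{\mathcal M}_{n(m-1)}(A_{\mathbf{x}})=0\}$, $X_2=\{\mathbf{x}:{\mathcal M}_{n(m-1)}(A_{\mathbf{x}})=1\}$, $Y_1=\{\mathbf{y}:{\mathcal M}_n(A^{\mathbf{y}})=0\}$, $Y_2=\{\mathbf{y}:{\mathcal M}_n(A^{\mathbf{y}})=1\}$.

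Finally I would run the same Fubini argument: ${\mathcal M}_{nm}(A)={\mathcal M}_n(X_2)={\mathcal M}_{n(m-1)}(Y_2)$, and since $A=\bigcup_{i,j}\big(A\cap(X_i\times Y_j)\big)$ with $A\cap(X_i\times Y_j)$ a null set whenever $(i,j)\neq(2,2)$ (because its $\mathbf{x}$-sections are null when $i=1$ and its $\mathbf{y}$-sections are null when $j=1$), one gets
\[
{\mathcal M}_{nm}(A)\leq{\mathcal M}_{nm}\big(A\cap(X_2\times Y_2)\big)\leq{\mathcal M}_n(X_2)\cdot{\mathcal M}_{n(m-1)}(Y_2)={\mathcal M}_{nm}(A)^2,
\]
forcing ${\mathcal M}_{nm}(A)\in\{0,1\}$. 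I do not anticipate a genuine obstacle; the only point requiring a moment's care is the bookkeeping showing that every $n$-plane used to slice a section $A_{\mathbf{x}}$ is itself parallel to a coordinate plane of the ambient $\mathbb{I}^{nm}$, so that the hypothesis transfers cleanly to the inductive step — this is exactly the "obvious modification" alluded to, and it is routine.
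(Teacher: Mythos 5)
Your proof is correct and is exactly the ``obvious modification'' the paper alludes to without writing out: you transplant the induction-plus-Fubini argument of Theorem \ref{theorem 21} to the block decomposition $\mathbb{I}^{nm}=\mathbb{I}^n\times\mathbb{I}^{n(m-1)}$, inducting on the number of factors $m$ instead of on $n$. The one point needing care --- that slicing a section $A_{\mathbf{x}}$ by an $n$-plane in $\mathbb{I}^{n(m-1)}$ parallel to a coordinate plane is the same as slicing $A$ by an $n$-plane parallel to a coordinate plane of $\mathbb{I}^{nm}$, so the hypothesis passes to the inductive step --- you handle correctly.
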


\subsection{${\mathcal A}$ and ${\mathcal B}$}

A relook at the definitions of ${\mathcal
A}_{n,m}(\Psi_1,\ldots,\Psi_m)$, ${\mathcal
B}_{n,m}(\Psi_1,\ldots,\Psi_m)$, ${\mathcal
A}_{n,m}^{\times}(\Psi)$, ${\mathcal B}_{n,m}^{\times}(\Psi)$ will
be useful to our later study. By letting
$\|\alpha\|\triangleq\min\{|\alpha+z|:z\in\mathbb{Z}\}$ for any
$\alpha\in\mathbb{R}$, and
\begin{align*}
\Theta(\mathbf{q},\mathbf{X})\triangleq\min\{|\mathbf{q}\mathbf{X}+z|:z\in\mathbb{Z}\
\mbox{with}\ z, \mathbf{q}\ \mbox{coprime}\}
\end{align*} for any
$\mathbf{q}\in\mathbb{Z}^n\backslash\{\mathbf{0}\}$ and any
$\mathbf{X}\in\mathbb{I}^n$, it is easy to see that ${\mathcal
A}_{n,m}(\Psi_1,\ldots,\Psi_m)$, ${\mathcal
B}_{n,m}(\Psi_1,\ldots,\Psi_m)$, ${\mathcal
A}_{n,m}^{\times}(\Psi)$, ${\mathcal B}_{n,m}^{\times}(\Psi)$ are
respectively the sets of $(\mathbf{X}^{(1)},\ldots,$
$\mathbf{X}^{(m)})\in\mathbb{I}^{nm}$ for which
\begin{align}
\|\mathbf{q}\mathbf{X}^{(j)}\|&<\Psi_j(\mathbf{q})\ \ \
(j=1,\ldots,m), \\
\Theta(\mathbf{q},\mathbf{X}^{(j)})&<\Psi_j(\mathbf{q})\ \ \
(j=1,\ldots,m), \\
\prod_{j=1}^m\|\mathbf{q}\mathbf{X}^{(j)}\|&<\Psi(\mathbf{q}),\\
\prod_{j=1}^m\Theta(\mathbf{q},\mathbf{X}^{(j)})&<\Psi(\mathbf{q})
\end{align}
for infinitely many $\mathbf{q}\in
\mathbb{Z}^n\backslash\{\mathbf{0}\}$.

\section{Zero-one laws in simultaneous Diophantine approximation}\label{n=1}

\begin{theorem}\label{theorem 31}
For any
$\{\Psi_j:\mathbb{Z}^n\backslash\{\mathbf{0}\}\rightarrow\mathbb{R}^{+}\}_{j=1}^m$,
${\mathcal M}_{nm}({\mathcal
A}_{n,m}(k_1\Psi_1,\ldots,k_m\Psi_m))\in\{0,1\}$ is independent of
$\{k_j>0\}_{j=1}^m$.
\end{theorem}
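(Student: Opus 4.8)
The plan is to combine the Cassels--Gallagher type result (Lemma~\ref{lemma 21}) with the higher-dimensional cross fibering principle (Theorem~\ref{theorem 22}). First I would fix the approximation functions and the constants $\{k_j\}$, and set $A = {\mathcal A}_{n,m}(k_1\Psi_1,\ldots,k_m\Psi_m) \subset \mathbb{I}^{nm}$. Recall from Subsection~2.3 that $A$ is the set of $(\mathbf{X}^{(1)},\ldots,\mathbf{X}^{(m)})$ for which $\|\mathbf{q}\mathbf{X}^{(j)}\| < k_j\Psi_j(\mathbf{q})$ for all $j$, for infinitely many $\mathbf{q}$. The key observation is that the infinitely-many condition ranges over a \emph{single} $\mathbf{q}$ shared by all $m$ coordinates, so when I fix all but one of the blocks $\mathbf{X}^{(j)}$ and let the remaining one, say $\mathbf{X}^{(j_0)}$, vary over an $n$-dimensional coordinate plane $\Pi$, the slice $A \cap \Pi$ is exactly a limsup set of the form handled by Lemma~\ref{lemma 21}.

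More precisely, for the fixed values $\mathbf{X}^{(j)}$ ($j \neq j_0$), restrict attention to those $\mathbf{q}$ for which $\|\mathbf{q}\mathbf{X}^{(j)}\| < k_j\Psi_j(\mathbf{q})$ already holds for every $j \neq j_0$; call this set of admissible $\mathbf{q}$ (possibly finite, possibly infinite). If it is finite the slice $A \cap \Pi$ is null and we are done for that slice. If it is infinite, then $A \cap \Pi$ is the set of $\mathbf{X}^{(j_0)} \in \mathbb{I}^n$ with $\|\mathbf{q}\mathbf{X}^{(j_0)}\| < k_{j_0}\Psi_{j_0}(\mathbf{q})$ for infinitely many admissible $\mathbf{q}$; this is precisely ${\mathcal H}_n(\omega, k_{j_0}M)$ where $\omega(\mathbf{q}) = \{1\}$ for admissible $\mathbf{q}$ (and empty otherwise) and $M_{\mathbf{q}}(1) = \Psi_{j_0}(\mathbf{q})$ --- i.e.\ the ${\mathcal A}$/Cassels instance. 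By Lemma~\ref{lemma 21}, ${\mathcal M}_n(A \cap \Pi) \in \{0,1\}$, and crucially this value is \emph{independent of} $k_{j_0}$.

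Having established that every coordinate-plane slice of $A$ has measure $0$ or $1$, Theorem~\ref{theorem 22} immediately gives ${\mathcal M}_{nm}(A) \in \{0,1\}$. For the independence in $\{k_j\}$, I would argue that changing one $k_{j_0}$ does not change ${\mathcal M}_{n}(A \cap \Pi)$ for planes parallel to the $j_0$-th coordinate plane (by the independence clause of Lemma~\ref{lemma 21}), hence does not change the set $X_2^{(j_0)}$ of base points where the slice has full measure, hence by Fubini does not change ${\mathcal M}_{nm}(A)$; iterating over $j_0 = 1,\ldots,m$ gives full independence. Alternatively, and more cleanly, one can note that ${\mathcal M}_{nm}(A) = \int_{\mathbb{I}^{n(m-1)}} {\mathcal M}_n((A)_{\widehat{j_0}})\, d\mathbf{X}$ where $(A)_{\widehat{j_0}}$ denotes the slice fixing all blocks but the $j_0$-th, and the integrand is $k_{j_0}$-independent pointwise.

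The main obstacle I anticipate is purely bookkeeping: verifying carefully that the slice $A \cap \Pi$ genuinely matches the hypotheses of Lemma~\ref{lemma 21} (in particular that the $\limsup$ is over infinitely many $\mathbf{q}$ with $M_{\mathbf{q}} > 0$, and that measurability of the relevant base-point sets $X_i$, $Y_j$ is in hand so Fubini applies), and handling the degenerate slices where only finitely many $\mathbf{q}$ are admissible. None of this is deep, but it must be stated precisely. A secondary point worth a remark is that for the ${\mathcal B}$ and ${\mathcal C}$ versions the same argument works verbatim with a different choice of $\omega(\mathbf{q})$ (reflecting the local/global coprimality conditions, using $\Theta$ in place of $\|\cdot\|$), so the theorem is stated for ${\mathcal A}$ only here but the method covers all of $\mathcal F$.
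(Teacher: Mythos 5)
Your proof is correct, and the core of the argument --- fibering ${\mathcal A}_{n,m}(k_1\Psi_1,\ldots,k_m\Psi_m)$ by fixing all blocks but $\mathbf{X}^{(j_0)}$, recognizing each slice as a Cassels--Gallagher set ${\mathcal H}_n(\omega,k_{j_0}M)$ with $\omega(\mathbf{q})=\{1\}$ (equivalently, with approximation function $k_{j_0}\Psi_{j_0}\chi_{\mathbf{W}}$ where $\mathbf{W}$ is the admissible set of $\mathbf{q}$), invoking Lemma~\ref{lemma 21} for $k_{j_0}$-independence of the slice measure, and then Fubini plus an iteration over $j_0$ --- is exactly the paper's argument for the independence clause. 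The one divergence is where the zero-one property comes from: the paper imports it as a black box from the Beresnevich--Velani result~(\ref{formula 12}), then combines it with the established independence (using monotonicity of the sets in $k$ so that the union over $k\in\mathbb{N}$ has the same measure as any fixed $k$); you instead derive it directly by noting that every coordinate-plane slice has measure $0$ or $1$ (by Lemma~\ref{lemma 21}) and applying the cross fibering principle of Theorem~\ref{theorem 22}. This is precisely the substitution the paper itself flags in the remark following Theorem~\ref{theorem 41} as an equally valid route, and it has the advantage of keeping the whole proof self-contained rather than leaning on~\cite{BV1}; the paper's route is marginally shorter given that~(\ref{formula 12}) is already available. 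Your bookkeeping caveats (finite admissible sets give null slices, measurability for Fubini) are real but routine and handled the same way in the paper.
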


\begin{proof}
Obviously, by symmetry and (\ref{formula 12}), to prove Theorem
\ref{theorem 31} it suffices to show that $
 {\mathcal M}_{nm}\big({\mathcal
A}_{n,m}(k\Psi_1,\Psi_2,\ldots,\Psi_m)\big)\ \mbox{is independent
of}\ k>0.
$
 Note first
\[{\mathcal
A}_{n,m}(k\Psi_1,\Psi_2,\ldots,\Psi_m)=\bigcup_{(\mathbf{X}^{(2)},\ldots,\mathbf{X}^{(m)})\in\mathbb{I}^{n(m-1)}}{\mathcal
A}_{(\mathbf{X}^{(2)},\ldots,\mathbf{X}^{(m)})}(k\Psi_1,\Psi_2,\ldots,\Psi_m),\]
where ${\mathcal
A}_{(\mathbf{X}^{(2)},\ldots,\mathbf{X}^{(m)})}(k\Psi_1,\Psi_2,\ldots,\Psi_m)$
is the set of $\mathbf{X}^{(1)}\in\mathbb{I}^n$ for which
\[\|\mathbf{q}\mathbf{X}^{(1)}\|<k\Psi_1(\mathbf{q})\ \ \ \ \& \ \ \|\mathbf{q}\mathbf{X}^{(j)}\|<\Psi_j(\mathbf{q})\ \ \ \ (j=2,\ldots,m)\]
for infinitely many
$\mathbf{q}\in\mathbb{Z}^n\backslash\{\mathbf{0}\}$. Equivalently,
${\mathcal
A}_{(\mathbf{X}^{(2)},\ldots,\mathbf{X}^{(m)})}(k\Psi_1,\Psi_2,\ldots,\Psi_m)$
is the set of  $\mathbf{X}^{(1)}\in\mathbb{I}^n$  for which
$\|\mathbf{q}\mathbf{X}^{(1)}\|<k\Psi_1(\mathbf{q})\chi_{\mathbf{W}}(\mathbf{q})$
for infinitely many
$\mathbf{q}\in\mathbb{Z}^n\backslash\{\mathbf{0}\}$, where
$\mathbf{W}$ is the  set of solutions
$\mathbf{q}\in\mathbb{Z}^{n}\backslash\{\mathbf{0}\}$ to
$\|\mathbf{q}\mathbf{X}^{(j)}\|<\Psi_j(\mathbf{q})\ (j=2,\ldots,m)$.
By Lemma \ref{lemma 21} and Fubini's theorem,  ${\mathcal
M}_{nm}\big({\mathcal A}_{n,m}(k\Psi_1,\Psi_2,\ldots,\Psi_m)\big)$
is independent of $k>0$. This finishes the proof.
\end{proof}

\begin{theorem}\label{theorem 32}
For any
$\{\Psi_j:\mathbb{Z}^n\backslash\{\mathbf{0}\}\rightarrow\mathbb{R}^{+}\}_{j=1}^m$,
${\mathcal M}_{nm}({\mathcal
B}_{n,m}(k_1\Psi_1,\ldots,k_m\Psi_m))\in\{0,1\}$ is independent of
$\{k_j>0\}_{j=1}^m$.
\end{theorem}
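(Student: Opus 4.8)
The plan is to reduce Theorem~\ref{theorem 32} to Theorem~\ref{theorem 31} together with Lemma~\ref{lemma 21}, following the same fibering strategy used for $\mathcal{A}$. As in the proof of Theorem~\ref{theorem 31}, by symmetry and by \eqref{formula 12} it suffices to prove that $\mathcal{M}_{nm}\big(\mathcal{B}_{n,m}(k\Psi_1,\Psi_2,\ldots,\Psi_m)\big)$ is independent of $k>0$. First I would fiber $\mathbb{I}^{nm}$ over the last $m-1$ coordinate planes: write
\[\mathcal{B}_{n,m}(k\Psi_1,\Psi_2,\ldots,\Psi_m)=\bigcup_{(\mathbf{X}^{(2)},\ldots,\mathbf{X}^{(m)})\in\mathbb{I}^{n(m-1)}}\mathcal{B}_{(\mathbf{X}^{(2)},\ldots,\mathbf{X}^{(m)})}(k\Psi_1,\Psi_2,\ldots,\Psi_m),\]
where the fiber is the set of $\mathbf{X}^{(1)}\in\mathbb{I}^n$ with $\Theta(\mathbf{q},\mathbf{X}^{(1)})<k\Psi_1(\mathbf{q})$ for infinitely many $\mathbf{q}\in\mathbf{W}$, and $\mathbf{W}=\mathbf{W}(\mathbf{X}^{(2)},\ldots,\mathbf{X}^{(m)})$ is the set of $\mathbf{q}\in\mathbb{Z}^n\setminus\{\mathbf{0}\}$ satisfying $\Theta(\mathbf{q},\mathbf{X}^{(j)})<\Psi_j(\mathbf{q})$ for all $j=2,\ldots,m$.

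The key point is to recognize each fiber as a set of the form $\mathcal{H}_n(\omega,kM)$ appearing in Lemma~\ref{lemma 21}. The condition $\Theta(\mathbf{q},\mathbf{X}^{(1)})<k\Psi_1(\mathbf{q})$ means $|\mathbf{q}\mathbf{X}^{(1)}+p|<k\Psi_1(\mathbf{q})$ for some $p\in\mathbb{Z}$ coprime to $\mathbf{q}$, i.e.\ coprime to $\gamma=\gcd(\mathbf{q})$. So I would take $\omega(\mathbf{q})=\{\gcd(\mathbf{q})\}$ for $\mathbf{q}\in\mathbf{W}$ and $\omega(\mathbf{q})=\varnothing$ otherwise, and set $M_{\mathbf{q}}(\gcd(\mathbf{q}))=\Psi_1(\mathbf{q})$ for $\mathbf{q}\in\mathbf{W}$. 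With these choices the fiber $\mathcal{B}_{(\mathbf{X}^{(2)},\ldots,\mathbf{X}^{(m)})}(k\Psi_1,\Psi_2,\ldots,\Psi_m)$ is exactly $\mathcal{H}_n(\omega,kM)$, hence by Lemma~\ref{lemma 21} its measure is $0$ or $1$ independently of $k>0$. An application of Fubini's theorem then gives that $\mathcal{M}_{nm}\big(\mathcal{B}_{n,m}(k\Psi_1,\Psi_2,\ldots,\Psi_m)\big)$ is independent of $k>0$, and combined with \eqref{formula 12} this yields the claimed $\{0,1\}$-valuedness uniform in $\{k_j\}$.

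The one point that needs a little care—and which I expect to be the main (mild) obstacle—is the measurability bookkeeping for the Fubini step: one must check that the exceptional triples ignored in the construction of $\mathcal{H}_n$ (those with $N(\mathbf{q},p)$ empty, or $M_{\mathbf{q}}(\gamma)=0$, or the coprimality failing) do not affect the fiberwise identification, and that the map $(\mathbf{X}^{(2)},\ldots,\mathbf{X}^{(m)})\mapsto\mathbf{W}$ is measurable enough that $\mathbf{X}^{(1)}\mapsto k\Psi_1(\mathbf{q})\chi_{\mathbf{W}}(\mathbf{q})$ produces a jointly measurable set; this is the same routine verification already invoked in the proof of Theorem~\ref{theorem 31}. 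Everything else is a direct transcription of the $\mathcal{A}$-case, with $\|\cdot\|$ replaced by $\Theta(\mathbf{q},\cdot)$ and with Lemma~\ref{lemma 21} applied in the genuinely coprime setting $\omega(\mathbf{q})=\{\gcd(\mathbf{q})\}$ rather than the trivial $\omega(\mathbf{q})=\{1\}$ setting.
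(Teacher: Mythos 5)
Your proposal is correct and is exactly the ``obvious modification'' of the proof of Theorem~\ref{theorem 31} that the paper leaves to the reader: the paper's own proof of Theorem~\ref{theorem 32} is just a one-line reference to Theorem~\ref{theorem 31}, and what you have done is spell out that the only change is to replace $\|\cdot\|$ by $\Theta(\mathbf{q},\cdot)$ and to invoke Lemma~\ref{lemma 21} with $\omega(\mathbf{q})=\{\gcd(\mathbf{q})\}$ (the Gallagher choice) on the support $\mathbf{W}$ instead of $\omega(\mathbf{q})=\{1\}$ (the Cassels choice). No genuine gap; the measurability point you flag is indeed the same routine check already implicit in the $\mathcal{A}$-case.
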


\begin{proof}
The proof of Theorem \ref{theorem 32} is pretty much the same as
that of Theorem \ref{theorem 31} with obvious modification, and we
leave the details to the interested readers.
\end{proof}

\begin{theorem}\label{theorem 33}
For any
$\{\Psi_j:\mathbb{Z}^n\backslash\{\mathbf{0}\}\rightarrow\mathbb{R}^{+}\}_{j=1}^m$,
${\mathcal M}_{nm}({\mathcal
C}_{n,m}(k_1\Psi_1,\ldots,k_m\Psi_m))\in\{0,1\}$ is independent of
$\{k_j>0\}_{j=1}^m$.
\end{theorem}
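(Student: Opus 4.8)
The plan is to mimic the proof of Theorem \ref{theorem 31}, now taking advantage of Lemma \ref{lemma 21} in its full generality, where $\omega(\mathbf{q})$ is allowed to be an arbitrary set of divisors of $\gcd(\mathbf{q})$ --- indeed, this is the feature of Lemma \ref{lemma 21} that is needed precisely for the set $\mathcal{C}$ (cf.\ the remark following its proof). By symmetry in the indices $j=1,\ldots,m$ and by (\ref{formula 12}), it suffices to prove that $\mathcal{M}_{nm}\big(\mathcal{C}_{n,m}(k\Psi_1,\Psi_2,\ldots,\Psi_m)\big)$ is independent of $k>0$. Granting this, the measure does not depend on any single $k_j$ (apply it with $\Psi_j$ replaced by $k_j\Psi_j$, then use symmetry), hence equals some constant $v$ independent of $(k_1,\ldots,k_m)$; since $\bigcup_{k\in\mathbb{N}}\mathcal{C}_{n,m}(k\Psi_1,\ldots,k\Psi_m)$ is an increasing union of measure $v$, (\ref{formula 12}) forces $v\in\{0,1\}$.

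For the $k$-independence I would fibre over the last $m-1$ blocks of coordinates, so that $\mathcal{C}_{n,m}(k\Psi_1,\Psi_2,\ldots,\Psi_m)$ is the union over $(\mathbf{X}^{(2)},\ldots,\mathbf{X}^{(m)})\in\mathbb{I}^{n(m-1)}$ of fibres $\mathcal{C}_{(\mathbf{X}^{(2)},\ldots,\mathbf{X}^{(m)})}(k\Psi_1,\Psi_2,\ldots,\Psi_m)$, each being the set of $\mathbf{X}^{(1)}\in\mathbb{I}^n$ for which there are infinitely many $\mathbf{q}\in\mathbb{Z}^n\backslash\{\mathbf{0}\}$ admitting $p_1,\ldots,p_m\in\mathbb{Z}$ with $|\mathbf{q}\mathbf{X}^{(1)}+p_1|<k\Psi_1(\mathbf{q})$, with $|\mathbf{q}\mathbf{X}^{(j)}+p_j|<\Psi_j(\mathbf{q})$ for $j=2,\ldots,m$, and with $\gcd(\mathbf{q},p_1,\ldots,p_m)=1$. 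The key is the elementary identity $\gcd(\mathbf{q},p_1,\ldots,p_m)=\gcd\big(\gcd(\mathbf{q},p_2,\ldots,p_m),p_1\big)$, which decouples the coprimality constraint on $p_1$ from $p_2,\ldots,p_m$ once the value $\gamma=\gcd(\mathbf{q},p_2,\ldots,p_m)$ has been recorded. Accordingly, for a fixed tuple $(\mathbf{X}^{(2)},\ldots,\mathbf{X}^{(m)})$, set $M_{\mathbf{q}}\equiv\Psi_1(\mathbf{q})$ and let $\omega(\mathbf{q})$ consist of all integers $\gcd(\mathbf{q},p_2,\ldots,p_m)$ arising from some $(p_2,\ldots,p_m)$ with $|\mathbf{q}\mathbf{X}^{(j)}+p_j|<\Psi_j(\mathbf{q})$ for $j=2,\ldots,m$ (with $\omega(\mathbf{q})=\emptyset$ when no such tuple exists, in which case $\mathbf{q}$ contributes nothing, consistently with the definition of $\mathcal{H}_n$). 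Every member of $\omega(\mathbf{q})$ divides $\gcd(\mathbf{q})$, and since a fixed $\mathbf{q}\neq\mathbf{0}$ admits only finitely many admissible $p_1$, the fibre equals $\mathcal{H}_n(\omega,kM)$ exactly. Lemma \ref{lemma 21} then gives that the fibre has $\mathcal{M}_n$-measure $0$ or $1$, independent of $k>0$; as $\mathcal{C}_{n,m}(k\Psi_1,\Psi_2,\ldots,\Psi_m)$ is a $\limsup$ set and hence measurable, Fubini's theorem writes its $\mathcal{M}_{nm}$-measure as the integral of these $k$-independent fibre measures, so it too is independent of $k$.

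I expect the only step needing genuine care to be the identification of the fibre with a set of the form $\mathcal{H}_n(\omega,kM)$: unlike for $\mathcal{A}$ (where $\omega(\mathbf{q})$ may be taken as $\{1\}$) and for $\mathcal{B}$ (where it may be taken as $\{\gcd(\mathbf{q})\}$), the global coprimality condition forces $\omega(\mathbf{q})$ to range over a whole family of divisors of $\gcd(\mathbf{q})$, so one must invoke the general Lemma \ref{lemma 21} rather than the Cassels or Gallagher statements. The auxiliary points --- measurability of the fibres for almost every $(\mathbf{X}^{(2)},\ldots,\mathbf{X}^{(m)})$, and the harmless replacement of ``infinitely many pairs $(\mathbf{q},p_1)$'' by ``infinitely many $\mathbf{q}$'' --- are routine. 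This is, in effect, the same mechanism that proves Theorems \ref{theorem 31} and \ref{theorem 32}, with $\omega(\mathbf{q})$ simply enlarged to accommodate the weaker, coupled coprimality requirement.
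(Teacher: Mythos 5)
Your proof is correct and follows essentially the same route as the paper: after reducing by symmetry and \eqref{formula 12} to $k$-independence in one slot, you fibre over $(\mathbf{X}^{(2)},\ldots,\mathbf{X}^{(m)})$, identify each fibre with an ${\mathcal H}_n(\omega,kM)$ by setting $\omega(\mathbf{q})=\{\gcd(\mathbf{q},p_2,\ldots,p_m):(p_2,\ldots,p_m)\ \text{admissible}\}$, and apply Lemma~\ref{lemma 21} plus Fubini. The only cosmetic difference is that the paper encodes the ``no admissible $(p_2,\ldots,p_m)$'' case by multiplying $\Psi_1$ by the indicator $\chi_{\mathbf{W}}$ of the support, whereas you equivalently set $\omega(\mathbf{q})=\emptyset$ there.
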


\begin{proof}
Obviously, by symmetry and (\ref{formula 12}), to prove Theorem
\ref{theorem 33} it suffices to show that $
 {\mathcal M}_{nm}\big({\mathcal
C}_{n,m}(k\Psi_1,\Psi_2,\ldots,\Psi_m)\big)\ \mbox{is independent
of}\ k>0. $
 Note first
\[{\mathcal
C}_{n,m}(k\Psi_1,\Psi_2,\ldots,\Psi_m)=\bigcup_{(\mathbf{X}^{(2)},\ldots,\mathbf{X}^{(m)})\in\mathbb{I}^{n(m-1)}}{\mathcal
C}_{(\mathbf{X}^{(2)},\ldots,\mathbf{X}^{(m)})}(k\Psi_1,\Psi_2,\ldots,\Psi_m),\]
where ${\mathcal
C}_{(\mathbf{X}^{(2)},\ldots,\mathbf{X}^{(m)})}(k\Psi_1,\Psi_2,\ldots,\Psi_m)$
is the set of $\mathbf{X}^{(1)}\in\mathbb{I}^n$ for which
\[|\mathbf{q}\mathbf{X}^{(1)}+p_1|<k\Psi_1(\mathbf{q})\ \ \ \ \& \ \ |\mathbf{q}\mathbf{X}^{(j)}+p_j|<\Psi_j(\mathbf{q})\ \ \ \ (j=2,\ldots,m)\]
for infinitely many coprime pairs
$\mathbf{q}\in\mathbb{Z}^n\backslash\{\mathbf{0}\}$ and
$(p_1,\ldots,p_m)\in\mathbb{Z}^m$. For each non-empty solution set
$\mathbf{S}(\mathbf{q})=\mathbf{S}_{\mathbf{X}^{(2)},\ldots,\mathbf{X}^{(m)};\Psi_2,\ldots,\Psi_m}(\mathbf{q})$
of $(p_2,\ldots,p_m)\in\mathbb{Z}^{m-1}$ to
\[|\mathbf{q}\mathbf{X}^{(j)}+p_j|<\Psi_j(\mathbf{q})\ \ \
(j=2,\ldots,m),\] we define a subset of divisors of
$\mbox{gcd}(\mathbf{q})$ by $ \omega(\mathbf{q})=
\{\mbox{gcd}(\mathbf{q},\gamma):\gamma\in\mathbf{S}(\mathbf{q})\}.$
By letting $\mathbf{W}$ be the support of $\mathbf{S}$, it is easy
to see that
\[{\mathcal
C}_{(\mathbf{X}^{(2)},\ldots,\mathbf{X}^{(m)})}(k\Psi_1,\Psi_2,\ldots,\Psi_m)={\mathcal
H}_n(\omega,k\Psi_1\chi_{\mathbf{W}}).\] By Lemma \ref{lemma 21} and
Fubini's theorem, ${\mathcal M}_{nm}\big({\mathcal
C}_{n,m}(k\Psi_1,\Psi_2,\ldots,\Psi_m)\big)$ is independent of
$k>0$. This finishes the proof.
\end{proof}

\section{Zero-one laws in multiplicative Diophantine
approximation}\label{section 4}

\begin{theorem}\label{theorem 41}
For any
$\Psi:\mathbb{Z}^n\backslash\{\mathbf{0}\}\rightarrow\mathbb{R}^{+}$,
${\mathcal M}_{nm}({\mathcal C}_{n,m}^{\times}(k\Psi))\in\{0,1\}$ is
independent of $k>0$.
\end{theorem}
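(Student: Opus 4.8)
The plan is to fiber over $m-1$ of the coordinate blocks, identify each section with a set of the type ${\mathcal H}_n(\omega,kM)$ handled by Lemma \ref{lemma 21}, and then invoke the cross fibering principle Theorem \ref{theorem 22}. Since ${\mathcal C}_{n,m}^{\times}(k\Psi)$ is invariant under any permutation of the blocks $\mathbf{X}^{(1)},\ldots,\mathbf{X}^{(m)}$ (with the corresponding relabelling of $p_1,\ldots,p_m$), it suffices to prove that for \emph{every} $(\mathbf{X}^{(2)},\ldots,\mathbf{X}^{(m)})\in\mathbb{I}^{n(m-1)}$ the section
\[S_k=\{\mathbf{X}^{(1)}\in\mathbb{I}^n:(\mathbf{X}^{(1)},\ldots,\mathbf{X}^{(m)})\in{\mathcal C}_{n,m}^{\times}(k\Psi)\}\]
satisfies ${\mathcal M}_n(S_k)\in\{0,1\}$ with this value independent of $k>0$: by symmetry the same then holds for sections parallel to any coordinate plane, so the zero-one conclusion for ${\mathcal M}_{nm}({\mathcal C}_{n,m}^{\times}(k\Psi))$ follows from Theorem \ref{theorem 22}, while the independence of $k$ follows from Fubini's theorem since the section measures do not depend on $k$.

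So fix $\mathbf{X}^{(2)},\ldots,\mathbf{X}^{(m)}$. If $\mathbf{q}\mathbf{X}^{(j)}\in\mathbb{Z}$ for some $j\ge2$ and infinitely many $\mathbf{q}$, then for each such $\mathbf{q}$ the choice $p_j=-\mathbf{q}\mathbf{X}^{(j)}$ makes $\prod_{l=1}^m|\mathbf{q}\mathbf{X}^{(l)}+p_l|=0<k\Psi(\mathbf{q})$, and one can still achieve $\mbox{gcd}(\mathbf{q},p_1,\ldots,p_m)=1$ by a suitable choice of $p_1$ (e.g.\ $p_1=1$); hence $S_k=\mathbb{I}^n$ for all $k$ and we are done in this case. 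Otherwise only finitely many $\mathbf{q}$ are exceptional, so discarding them we may assume $|\mathbf{q}\mathbf{X}^{(j)}+p_j|>0$ for all relevant $\mathbf{q}$, all $j\ge2$ and all $p_j\in\mathbb{Z}$.

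In the main case, for each $\mathbf{q}$ let $\omega(\mathbf{q})$ be the set of all divisors of $\mbox{gcd}(\mathbf{q})$ and, for $\delta\in\omega(\mathbf{q})$, set $M_{\mathbf{q}}(\delta)=\Psi(\mathbf{q})/D_{\mathbf{q}}(\delta)$, where
\[D_{\mathbf{q}}(\delta)=\min\Big\{\prod_{j=2}^m|\mathbf{q}\mathbf{X}^{(j)}+p_j|\ :\ (p_2,\ldots,p_m)\in\mathbb{Z}^{m-1},\ \mbox{gcd}(\mathbf{q},p_2,\ldots,p_m)=\delta\Big\},\]
a positive minimum attained on a finite set since each factor is nonzero and tends to infinity with $|p_j|$. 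The crux is the identity $S_k={\mathcal H}_n(\omega,kM)$. For ``$\subseteq$'', given witnesses $\mathbf{q},\mathbf{p}$ for $\mathbf{X}^{(1)}\in S_k$, put $\delta=\mbox{gcd}(\mathbf{q},p_2,\ldots,p_m)\in\omega(\mathbf{q})$; then $\mbox{gcd}(\delta,p_1)=\mbox{gcd}(\mathbf{q},p_1,\ldots,p_m)=1$ and $\prod_{j\ge2}|\mathbf{q}\mathbf{X}^{(j)}+p_j|\ge D_{\mathbf{q}}(\delta)$, whence $|\mathbf{q}\mathbf{X}^{(1)}+p_1|<k\Psi(\mathbf{q})/D_{\mathbf{q}}(\delta)=kM_{\mathbf{q}}(\delta)$. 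For ``$\supseteq$'', given $\mathbf{q},p_1,\delta$ witnessing $\mathbf{X}^{(1)}\in{\mathcal H}_n(\omega,kM)$, pick $(p_2,\ldots,p_m)$ attaining $D_{\mathbf{q}}(\delta)$; then $\mbox{gcd}(\mathbf{q},p_1,\ldots,p_m)=\mbox{gcd}(\delta,p_1)=1$ and $\prod_{j=1}^m|\mathbf{q}\mathbf{X}^{(j)}+p_j|=|\mathbf{q}\mathbf{X}^{(1)}+p_1|\,D_{\mathbf{q}}(\delta)<k\Psi(\mathbf{q})$. Lemma \ref{lemma 21} applied to ${\mathcal H}_n(\omega,kM)$ then gives ${\mathcal M}_n(S_k)\in\{0,1\}$ independently of $k$, completing the section analysis.

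I expect the only genuine obstacle to be the coprimality bookkeeping behind the definition of $M_{\mathbf{q}}$ and the identity $S_k={\mathcal H}_n(\omega,kM)$: one must use the elementary relation $\mbox{gcd}(\mathbf{q},p_1,\ldots,p_m)=\mbox{gcd}\big(\mbox{gcd}(\mathbf{q},p_2,\ldots,p_m),p_1\big)$ so that minimizing the tail product over tuples with a prescribed tail-gcd $\delta$ exactly records the global coprimality constraint, and check that the resulting $D_{\mathbf{q}}(\delta)$ is finite and positive. Once this is in place, the remainder is a routine appeal to Lemma \ref{lemma 21}, Fubini's theorem and Theorem \ref{theorem 22}, plus the elementary degenerate-factor case treated above.
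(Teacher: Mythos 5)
Your proof follows essentially the same route as the paper's: fiber over $\mathbf{X}^{(2)},\ldots,\mathbf{X}^{(m)}$, take $\omega(\mathbf{q})$ to be all divisors of $\gcd(\mathbf{q})$, set $M_{\mathbf{q}}(\gamma)$ via the minimum of the tail product over tuples with prescribed $\gcd(\mathbf{q},p_2,\ldots,p_m)=\gamma$, identify the section with ${\mathcal H}_n(\omega,kM)$, and close with Lemma \ref{lemma 21}, Fubini, and Theorem \ref{theorem 22}. Two small bookkeeping remarks: the paper phrases the degenerate case as ``some $M_{\mathbf{q}}(\gamma)=0$ with $\Psi(\mathbf{q})>0$'' and disposes of it with the convention $\tfrac{0}{0}=0$ (in particular allowing $\Psi$ to vanish, which your write-up implicitly excludes), and your ``finitely many exceptional $\mathbf{q}$'' branch is in fact the ``no exceptional $\mathbf{q}$'' branch, since $\mathbf{q}_0\mathbf{X}^{(j)}\in\mathbb{Z}$ forces $t\mathbf{q}_0\mathbf{X}^{(j)}\in\mathbb{Z}$ for every $t$, so the exceptional set is either empty or infinite — worth saying explicitly, because a single exceptional $\mathbf{q}$ (via infinitely many admissible $p_1$) already gives $S_k=\mathbb{I}^n$, and ``discarding'' it would break the set identity $S_k={\mathcal H}_n(\omega,kM)$ if that branch were actually nonvacuous.
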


\begin{proof}
We first prove the independence property.  Note
\[{\mathcal
C}_{n,m}^{\times}(k\Psi)=\bigcup_{(\mathbf{X}^{(2)},\ldots,\mathbf{X}^{(m)})\in\mathbb{I}^{n(m-1)}}{\mathcal
C}^{\times}_{(\mathbf{X}^{(2)},\ldots,\mathbf{X}^{(m)})}(k\Psi),\]
where ${\mathcal
C}^{\times}_{(\mathbf{X}^{(2)},\ldots,\mathbf{X}^{(m)})}(k\Psi)$ is
the set of $\mathbf{X}^{(1)}\in\mathbb{I}^n$ for which
$\prod_{j=1}^m|\mathbf{q}\mathbf{X}^{(j)}+p_j|<k\Psi(\mathbf{q})$
holds for infinitely many coprime pairs
$\mathbf{q}\in\mathbb{Z}^n\backslash\{0\}$ and
$(p_1,\ldots,p_m)\in\mathbb{Z}^m$. For each
$\mathbf{q}\in\mathbb{Z}^n\backslash\{\mathbf{0}\}$, let
$\omega(\mathbf{q})$ be the set of all divisors of
$\mbox{gcd}(\mathbf{q})$, and define
\[M_{\mathbf{q}}(\gamma)=\min\big\{\prod_{j=2}^m|\mathbf{q}\mathbf{X}^{(j)}+p_j|:\mbox{gcd}(\mathbf{q},p_2,\ldots,p_m)=\gamma\big\}\
\  \ (\forall\gamma\in\omega(\mathbf{q})).\] It is easy to see that
${\mathcal
C}^{\times}_{(\mathbf{X}^{(2)},\ldots,\mathbf{X}^{(m)})}(k\Psi)$ is
the set of $\mathbf{X}^{(1)}\in\mathbb{I}^n$ for which
$|\mathbf{q}\mathbf{X}^{(1)}+p_1|\cdot
M_{\mathbf{q}}(\gamma)<k\Psi(\mathbf{q})$ holds for infinitely many
$(\mathbf{q},p_1)$ with $p_1$ coprime to some
$\gamma\in\omega(\mathbf{q})$. If there exists a pair
$(\mathbf{q},\gamma)$ with $M_{\mathbf{q}}(\gamma)=0$ and
$\Psi(\mathbf{q})>0$, then we obviously have ${\mathcal
C}^{\times}_{(\mathbf{X}^{(2)},\ldots,\mathbf{X}^{(m)})}(k\Psi)=\mathbb{I}^n$.
So by expelling such kind of existences and assuming
$\frac{0}{0}=0$,
\[{\mathcal
C}^{\times}_{(\mathbf{X}^{(2)},\ldots,\mathbf{X}^{(m)})}(k\Psi)={\mathcal
H}_n(\omega,\frac{k\Psi(\mathbf{q})}{M_{\mathbf{q}}(\gamma)}).\] By
Lemma \ref{lemma 21}, ${\mathcal M}_n({\mathcal
C}^{\times}_{(\mathbf{X}^{(2)},\ldots,\mathbf{X}^{(m)})}(k\Psi))\in\{0,1\}$
is independent of $k>0$. By Fubini's theorem, ${\mathcal
M}_{nm}({\mathcal C}_{n,m}^{\times}(k\Psi))$ is independent of
$k>0$.

Next we prove the zero-one property. To this purpose, by Theorem
\ref{theorem 22} it suffices to prove that, for example, for any
fixed $\mathbf{X}^{(2)},\ldots,\mathbf{X}^{(m)}\in\mathbb{I}^n$, the
set of $\mathbf{X}^{(1)}\in\mathbb{I}^n$ for which
$\prod_{j=1}^m|\mathbf{q}\mathbf{X}^{(j)}+p_j|<k\Psi(\mathbf{q})$
holds for infinitely many coprime pairs
$\mathbf{q}\in\mathbb{Z}^n\backslash\{\mathbf{0}\}$ and
$(p_1,\ldots,p_m)\in\mathbb{Z}^m$ has $n$-dimensional measure either
0 or 1. But this fact has been proved previously, we are done.
\end{proof}

\begin{remark}
Similar to the proof of the zero-one part of Theorem \ref{theorem
41}, one can replace (\ref{formula 12}) with Theorem \ref{theorem
22} as substitute tool in the proofs of the corresponding parts of
Theorems \ref{theorem 31}$\sim$\ref{theorem 33}.
\end{remark}

\begin{theorem}\label{theorem 43}
For ${\mathcal G}\in\{{\mathcal A},{\mathcal B}\}$ and any
$\Psi:\mathbb{Z}^n\backslash\{\mathbf{0}\}\rightarrow\mathbb{R}^{+}$,
${\mathcal M}_{nm}({\mathcal G}_{n,m}^{\times}(k\Psi))\in\{0,1\}$ is
independent of $k>0$.
\end{theorem}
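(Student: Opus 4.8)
The plan is to run the proof of Theorem~\ref{theorem 41} once more, the only change being the choice of the divisor sets $\omega(\mathbf{q})$ fed into Lemma~\ref{lemma 21}: for ${\mathcal G}={\mathcal A}$ (free condition on $\mathbf{q},\mathbf{p}$) one takes the ``Cassels'' choice $\omega(\mathbf{q})=\{1\}$ of the Remark after Lemma~\ref{lemma 21}, so that the requirement ``$p$ coprime to some $\gamma\in\omega(\mathbf{q})$'' is vacuous, and for ${\mathcal G}={\mathcal B}$ (local coprimality of $\mathbf{q}$ and each $p_j$) one takes the ``Gallagher'' choice $\omega(\mathbf{q})=\{\mbox{gcd}(\mathbf{q})\}$, so that ``$p$ coprime to $\mbox{gcd}(\mathbf{q})$'' is the same as ``$p$ coprime to $\mathbf{q}$''. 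I would prove the independence in $k$ and the zero-one property simultaneously, fibering ${\mathcal G}_{n,m}^{\times}(k\Psi)$ over the last $m-1$ coordinate blocks exactly as in the proof of Theorem~\ref{theorem 41}.

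So fix $\mathbf{X}^{(2)},\ldots,\mathbf{X}^{(m)}\in\mathbb{I}^n$ and look at the section ${\mathcal G}^{\times}_{(\mathbf{X}^{(2)},\ldots,\mathbf{X}^{(m)})}(k\Psi)$ of ${\mathcal G}_{n,m}^{\times}(k\Psi)$ through them. For each $\mathbf{q}\in\mathbb{Z}^n\backslash\{\mathbf{0}\}$ set
\[
M_{\mathbf{q}}=\prod_{j=2}^m\|\mathbf{q}\mathbf{X}^{(j)}\|\quad(\text{for }{\mathcal G}={\mathcal A}),\qquad
M_{\mathbf{q}}=\prod_{j=2}^m\Theta(\mathbf{q},\mathbf{X}^{(j)})\quad(\text{for }{\mathcal G}={\mathcal B}),
\]
i.e.\ the least value of $\prod_{j=2}^m|\mathbf{q}\mathbf{X}^{(j)}+p_j|$ subject respectively to no constraint on the $p_j$, or to $p_j$ coprime to $\mathbf{q}$ for every $j\ge2$. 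If there is a single $\mathbf{q}$ with $M_{\mathbf{q}}=0$ (recall $\Psi(\mathbf{q})>0$ always), then choosing the $p_j$ ($j\ge2$) that realize $M_{\mathbf{q}}=0$ and letting $p_1$ run through the infinitely many integers admissible for ${\mathcal G}$ shows, exactly as in the proof of Theorem~\ref{theorem 41}, that the section is all of $\mathbb{I}^n$. Otherwise $M_{\mathbf{q}}>0$ for every $\mathbf{q}$; putting $M'_{\mathbf{q}}(\gamma)=\Psi(\mathbf{q})/M_{\mathbf{q}}$ on $\omega(\mathbf{q})$ and first minimizing the defining product over $p_2,\ldots,p_m$ and then over $p_1$, one checks that the section equals ${\mathcal H}_n(\omega,kM')$.

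By Lemma~\ref{lemma 21}, in every case ${\mathcal M}_n$ of the section is $0$ or $1$ and is independent of $k>0$; and by the symmetry of $\prod_{j=1}^m$ in the definition of ${\mathcal G}_{n,m}^{\times}$ the same holds for the section by any $n$-dimensional plane parallel to any one of the $m$ coordinate copies of $\mathbb{I}^n$. Hence Theorem~\ref{theorem 22} gives ${\mathcal M}_{nm}({\mathcal G}_{n,m}^{\times}(k\Psi))\in\{0,1\}$, while Fubini's theorem, applied after writing ${\mathcal G}_{n,m}^{\times}(k\Psi)$ as the union over $(\mathbf{X}^{(2)},\ldots,\mathbf{X}^{(m)})\in\mathbb{I}^{n(m-1)}$ of its sections, each of $k$-independent measure, gives that ${\mathcal M}_{nm}({\mathcal G}_{n,m}^{\times}(k\Psi))$ is independent of $k>0$. (For the zero-one part alone one could instead cite \cite[Theorem 1]{BHV} with $k\Psi$ in place of $\Psi$, but the argument above is self-contained and parallels Theorem~\ref{theorem 41}.) I do not expect a genuine obstacle here: all of the substance has already been absorbed into Lemma~\ref{lemma 21} and Theorem~\ref{theorem 22}, and the only points that need a moment's care — matching the coprimality condition (or its absence) to the right $\omega(\mathbf{q})$, and the degenerate $\mathbf{q}$ with $M_{\mathbf{q}}=0$ — are dealt with verbatim as in the proof of Theorem~\ref{theorem 41}.
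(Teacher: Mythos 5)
Your proof is correct and is precisely the "obvious modification" of the arguments for Theorems~\ref{theorem 31} and~\ref{theorem 41} that the paper gestures at and leaves to the reader: you fibre over the last $m-1$ coordinate blocks, choose $\omega(\mathbf{q})=\{1\}$ for ${\mathcal A}$ and $\omega(\mathbf{q})=\{\gcd(\mathbf{q})\}$ for ${\mathcal B}$, apply Lemma~\ref{lemma 21} on each fibre, and finish with Fubini and Theorem~\ref{theorem 22}. The only tiny imprecision is the parenthetical "$\Psi(\mathbf{q})>0$ always" (the paper's proof of Theorem~\ref{theorem 41} treats this as a hypothesis rather than a given), but this is immaterial since values of $\mathbf{q}$ with $\Psi(\mathbf{q})=0$ contribute nothing and can simply be discarded.
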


\begin{proof}
The proof of Theorem \ref{theorem 43} is pretty much the same as
those of Theorem \ref{theorem 31} and Theorem \ref{theorem 41} with
obvious modification, and we leave the details to the interested
readers  (see also \cite{BHV}).
\end{proof}

As consequences of all the theorems established so far, we have
(\ref{formula 13}) as well as \begin{align}{\mathcal
M}_{nm}\big(\bigcup_{k\in\mathbb{N}}{\mathcal
F}_{n,m}^{\times}(k\Psi)\big)={\mathcal
M}_{nm}\big(\bigcap_{k\in\mathbb{N}}{\mathcal
F}_{n,m}^{\times}(\frac{\Psi}{k})\big).
\end{align}

\section{Further results and questions}

\subsection{Simultaneous
approximation} For the case $\Psi_1=\cdots=\Psi_m$ in simultaneous
Diophantine approximation, we refer to \cite{BBDV} for a survey of a
series of conjectures (note in particular the Duffin-Schaeffer
conjecture and the Catlin conjecture) and
\cite{BV2,Gallagher3,Haynes,PollingtonVaughan,Schmidt,Vaaler} for
several remarkable progresses. We also highlight the following
clear-cut theorems without monotonicity assumptions, due to
respectively Gallagher-Schmidt-Beresnevich-Velani (see e.g.
\cite{BV2}) and Pollington-Vaughan (\cite{PollingtonVaughan}).

\begin{theorem} Let $mn>1$ and
$\Psi(\mathbf{q})=\psi(|\mathbf{q}|):\mathbb{Z}^n\backslash\{\mathbf{0}\}\rightarrow\mathbb{R}^{+}$.
Then
\[{\mathcal M}_{nm}({\mathcal
A}_{n,m}(\Psi,\ldots,\Psi))=1\Leftrightarrow {\mathcal
M}_{nm}({\mathcal
C}_{n,m}(\Psi,\ldots,\Psi))=1\Leftrightarrow\sum_{q=1}^{\infty}q^{n-1}\psi(q)^m=\infty.\]
\end{theorem}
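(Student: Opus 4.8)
The statement is a clean-cut convergence/divergence dichotomy for the sets $\mathcal{A}_{n,m}(\Psi,\dots,\Psi)$ and $\mathcal{C}_{n,m}(\Psi,\dots,\Psi)$ in the special case $\Psi(\mathbf{q})=\psi(|\mathbf{q}|)$ with $mn>1$. The plan is \emph{not} to reprove the whole statement from scratch but to assemble it from the classical results already alluded to in the text. First I would record the easy convergence direction: if $\sum_{q=1}^{\infty}q^{n-1}\psi(q)^m<\infty$, then a direct Borel--Cantelli argument applied to the natural covering of $\mathcal{C}_{n,m}(\Psi,\dots,\Psi)$ (balls of radius $\psi(|\mathbf{q}|)/|\mathbf{q}|$ around rational points $\mathbf{p}/\mathbf{q}$, of which there are $O(|\mathbf{q}|^n)$ for each fixed $\mathbf{q}$ of supremum norm $\sim q$) shows the $\limsup$ set is null; since $\mathcal{A}_{n,m}\supseteq\mathcal{C}_{n,m}$ this gives nullity of $\mathcal{A}_{n,m}$ too, and both implications ``measure $1\Rightarrow$ sum diverges'' follow by contraposition.

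For the divergence direction I would invoke the two named inputs separately. The Gallagher--Schmidt type theorem (via Beresnevich--Velani, cf.\ \cite{BV2}) supplies, for $mn>1$ and $\Psi$ depending only on $|\mathbf{q}|$, that $\sum q^{n-1}\psi(q)^m=\infty$ forces $\mathcal{M}_{nm}(\mathcal{A}_{n,m}(\Psi,\dots,\Psi))=1$; this is exactly the Khintchine--Groshev phenomenon for systems of linear forms, and the condition $mn>1$ is what removes the need for monotonicity of $\psi$. The Pollington--Vaughan theorem (\cite{PollingtonVaughan}), which is the higher-dimensional Duffin--Schaeffer statement, handles the coprime version: divergence of $\sum q^{n-1}\psi(q)^m$ (one checks the relevant sum with the Euler-$\varphi$ weights is comparable under $mn>1$) yields $\mathcal{M}_{nm}(\mathcal{C}_{n,m}(\Psi,\dots,\Psi))=1$. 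Combining these with the trivial inclusion $\mathcal{C}_{n,m}\subseteq\mathcal{A}_{n,m}$ (a full-measure subset forces the larger set to be full measure as well) closes the cycle of equivalences.

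The one genuine subtlety I would be careful about is the passage between the ``free'' sum $\sum q^{n-1}\psi(q)^m$ and the ``coprime-weighted'' sum that appears in the Pollington--Vaughan formulation; here one needs $mn>1$ to guarantee that the weighted and unweighted series converge or diverge together (a partial summation / comparison with $\sum \varphi(q) q^{n-2}\psi(q)^m$ argument). That is the step where the hypothesis $mn>1$ is essential and where a careless reduction would break. The rest is bookkeeping: the zero-one laws of the present paper (in particular Theorem~\ref{theorem 31}, Theorem~\ref{theorem 33}) are not even needed for this statement, since the quoted theorems already deliver measure exactly $1$, not merely positivity. I would therefore present the proof as: (i) Borel--Cantelli for convergence; (ii) quote Gallagher--Schmidt--Beresnevich--Velani for the $\mathcal{A}$ divergence half; (iii) quote Pollington--Vaughan plus the series-comparison remark for the $\mathcal{C}$ divergence half; (iv) note $\mathcal{C}_{n,m}\subseteq\mathcal{A}_{n,m}$ to tie the three conditions into a single chain of ``$\Leftrightarrow$''.
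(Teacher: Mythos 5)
The paper does not actually prove this statement. It is recorded in the final section purely as a known result and is attributed wholesale (both the ${\mathcal A}$ equivalence \emph{and} the ${\mathcal C}$ equivalence) to Gallagher--Schmidt--Beresnevich--Velani, with a pointer to \cite{BV2}; the Pollington--Vaughan theorem \cite{PollingtonVaughan} is quoted separately, as its own statement. So there is no in-paper proof to compare your reconstruction against, and I can only assess it on its own terms.

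Steps (i), (ii) and (iv) of your plan are fine: Borel--Cantelli for the convergence direction, the non-monotonic Khintchine--Groshev theorem for the ${\mathcal A}$ divergence half, and the trivial inclusion ${\mathcal C}_{n,m}\subseteq{\mathcal A}_{n,m}$. Step (iii) has a genuine gap. The Pollington--Vaughan theorem you want to invoke concerns ${\mathcal B}_{1,m}$, i.e. the \emph{local} coprimality condition $(q,p_j)=1$ for every $j$, in the special case $n=1$, and its divergence criterion is the $\varphi$-weighted series $\sum\bigl(\psi(q)\varphi(|q|)/|q|\bigr)^m$. The set in the theorem you must prove is ${\mathcal C}_{n,m}$, which carries the strictly weaker \emph{global} condition $\gcd(\mathbf q,p_1,\ldots,p_m)=1$, for general $n$, and the criterion is the unweighted sum $\sum q^{n-1}\psi(q)^m$. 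These are genuinely different sets (${\mathcal B}_{1,m}\subseteq{\mathcal C}_{1,m}\subseteq{\mathcal A}_{1,m}$) with genuinely different criteria, which is exactly why the paper states them as two separate theorems; you have conflated them. Moreover the ``series comparison'' you then lean on --- that under $mn>1$ the free sum and the $\varphi$-weighted sum converge or diverge together --- is precisely the kind of claim that fails for Duffin--Schaeffer-type criteria and is not justified: the only general lower bound is $\varphi(q)/q\gg(\log\log q)^{-1}$, so $\bigl(\psi(q)\varphi(q)/q\bigr)^m$ may be smaller than $\psi(q)^m$ by $(\log\log q)^{-m}$, and nothing you have written rules out concentrating $\psi$ on integers where this loss bites. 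The correct route for the ${\mathcal C}$ divergence half is the one the paper itself indicates: it is part of the Gallagher--Schmidt--Beresnevich--Velani theorem (see \cite{BV2}), obtained either directly or via the standard fact that, in the divergence regime, ${\mathcal A}_{n,m}$ and ${\mathcal C}_{n,m}$ differ by a null set; a detour through Pollington--Vaughan does not work.
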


\begin{theorem}
 Let $m>1$ and
$\psi:\mathbb{Z}\backslash\{0\}\rightarrow\mathbb{R}^{+}$. Then
\[{\mathcal M}_{m}({\mathcal
B}_{1,m}(\psi,\ldots,\psi))=1\Leftrightarrow\sum_{q\in\mathbb{Z}\backslash\{0\}}\big(\frac{\psi(q)\varphi(|q|)}{|q|}\big)^m=\infty,\]
where $\varphi$ is Euler's totient function.
\end{theorem}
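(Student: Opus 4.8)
This is the higher-dimensional Duffin--Schaeffer theorem of Pollington and Vaughan (\cite{PollingtonVaughan}); the plan would be as follows. By a standard reduction one may assume $q\in\mathbb{N}$ (replace $\psi(q)$ by $\max\{\psi(q),\psi(-q)\}$, since $|(-q)x+p|<\psi(-q)$ cuts out the same set of $x$ as $|qx-p|<\psi(-q)$) and $\psi(q)\le\tfrac12$; neither reduction changes either side of the asserted equivalence. With $\mathcal{E}_q(\psi)$ as in the introduction and $E_q:=\mathcal{E}_q(\psi)^m\subset\mathbb{I}^m$, the definition of $\mathcal{B}_{1,m}$ gives $\mathcal{B}_{1,m}(\psi,\dots,\psi)=\limsup_{q\to\infty}E_q$ directly; moreover the $\varphi(q)$ intervals comprising $\mathcal{E}_q(\psi)$ are pairwise disjoint once $\psi(q)\le\tfrac12$, so $\mathcal{M}_m(E_q)=\big(2\psi(q)\varphi(q)/q\big)^m$ and the series in the statement equals $\sum_q\mathcal{M}_m(E_q)$ up to a constant factor.

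The convergence half is then immediate: if $\sum_q\mathcal{M}_m(E_q)<\infty$, the easy direction of the Borel--Cantelli lemma gives $\mathcal{M}_m(\limsup E_q)=0$.

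For the divergence half I would first invoke the zero--one law: by Theorem~\ref{theorem 32}, $\mathcal{M}_m(\mathcal{B}_{1,m}(\psi,\dots,\psi))\in\{0,1\}$, so it suffices to show the measure is positive. For positivity I would apply the Kochen--Stone inequality: if $\sum_q\mathcal{M}_m(E_q)=\infty$ and
\[\sum_{q,q'\le N}\mathcal{M}_m(E_q\cap E_{q'})\ll\Big(\sum_{q\le N}\mathcal{M}_m(E_q)\Big)^2\]
along a sequence $N\to\infty$, then $\mathcal{M}_m(\limsup E_q)>0$, hence $=1$. By the product structure $\mathcal{M}_m(E_q\cap E_{q'})=\mathcal{M}_1\big(\mathcal{E}_q(\psi)\cap\mathcal{E}_{q'}(\psi)\big)^m$, so everything reduces to a one-dimensional overlap estimate, raised to the $m$-th power.

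This overlap estimate is the crux and, I expect, the main obstacle. By counting the pairs of reduced fractions $a/q$, $a'/q'$ whose $\psi$-neighbourhoods meet and controlling how the two coprimality conditions correlate, the goal is a bound
\[\mathcal{M}_1\big(\mathcal{E}_q(\psi)\cap\mathcal{E}_{q'}(\psi)\big)\ll\mathcal{M}_1\big(\mathcal{E}_q(\psi)\big)\,\mathcal{M}_1\big(\mathcal{E}_{q'}(\psi)\big)\cdot C(q,q')\qquad(q\ne q'),\]
where $C(q,q')\ge1$ is an explicit arithmetic factor equal to $1$ unless $q$ and $q'$ have common small prime factors, in which case it grows like $\prod_p(1-1/p)^{-1}$ over the offending primes. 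The naive independence bound ($C\equiv1$) is genuinely false --- it breaks down exactly when $q,q'$ have many common small prime divisors --- but the defect $C(q,q')^m$ is harmless once $m\ge2$, so that summing the displayed bound makes $\sum_{q,q'\le N}\mathcal{M}_m(E_q\cap E_{q'})$ comparable to $\big(\sum_{q\le N}\mathcal{M}_m(E_q)\big)^2$ in the relevant ranges. What remains is number-theoretic bookkeeping: organise the pairs $(q,q')$ by $\gcd(q,q')$ and by the size of $\psi(q)/q$, excise a sparse exceptional set of moduli $q$ --- those with $\psi(q)$ anomalously large relative to $q$, handled separately --- while keeping $\sum_q\mathcal{M}_m(E_q)=\infty$, possibly pass to a subsequence of the $q$'s so that the quasi-independence above holds, and sum. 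Throughout, the hypothesis $m>1$ is essential: for $m=1$ the statement is the Duffin--Schaeffer conjecture itself (\cite{DuffinSchaeffer}), which lies far deeper.
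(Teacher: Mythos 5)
The paper does not prove this theorem. It appears in Section~5 (``Further results and questions'') purely as a quoted result, attributed with no argument to Pollington and Vaughan and backed by the citation \cite{PollingtonVaughan}; there is therefore no proof in the paper to compare your sketch against. You correctly identify the source, and your outline---reduce to $q\in\mathbb{N}$ and $\psi\le\tfrac12$, recognise $\mathcal{B}_{1,m}(\psi,\dots,\psi)=\limsup_q\mathcal{E}_q(\psi)^m$, handle convergence by Borel--Cantelli, invoke a zero--one law for the divergence half, and then establish positive measure through a second-moment (Kochen--Stone) quasi-independence argument whose engine is an overlap estimate for $\mathcal{E}_q\cap\mathcal{E}_{q'}$---is the correct shape of the Pollington--Vaughan proof, and your use of the paper's Theorem~\ref{theorem 32} for the zero--one step is legitimate.

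One heuristic is stated in a way that could mislead. You write that ``the defect $C(q,q')^m$ is harmless once $m\ge2$,'' but since $C(q,q')\ge1$ the $m$-th power makes the pointwise overlap bound \emph{weaker}, not stronger, as $m$ grows; the advantage of $m\ge2$ is not pointwise. It only emerges after one sums over pairs $(q,q')$: the exponent $m$ affects both the arithmetic correction and the weights $\mathcal{M}_m(E_q)=\bigl(2\psi(q)\varphi(q)/q\bigr)^m$ simultaneously, and the decisive gain is an averaged estimate (roughly, a moment sum over shared prime divisors or over exceptional moduli that converges for exponent $\ge2$ but not for exponent $1$). Relatedly, the steps you describe as ``number-theoretic bookkeeping''---excising moduli with $\psi(q)/q$ anomalously large, partitioning pairs by $\gcd(q,q')$, and passing to a subsequence of $N$---are not garnish: in the Pollington--Vaughan argument they carry most of the weight and are exactly where $m\ge2$ is consumed. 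As a sketch of a cited external result your proposal is otherwise sound; as a self-contained proof it leaves the hardest lemma unproved, which is acceptable given that the paper itself supplies nothing beyond the reference.
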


\subsection{Multiplicative
approximation}  For the case ${\mathcal G}\in\{{\mathcal A},
{\mathcal B}\}\ \&\ n=1$ in multiplicative Diophantine
approximation, we refer to \cite{BHV} for appropriate conjectures
and \cite{BHV,Gallagher2} for Duffin-Schaeffer and
Khintchine-Groshev types theorems. For the case ${\mathcal
G}={\mathcal C}\ \&\ n=1\ \&\ m\geq2$, we would like to propose the
following conjecture.

\begin{Conjecture} Let $m\geq2$ and
$\psi:\mathbb{Z}\backslash\{0\}\rightarrow[0,\frac{1}{2}]$. Then
\[{\mathcal M}_m({\mathcal
C}_{1,m}^{\times}(\psi))=1\Leftrightarrow\sum_{q\in\mathbb{Z}\backslash\{0\}}\psi(q)\log^{m-1}\frac{1}{\psi(q)}=\infty.\]
\end{Conjecture}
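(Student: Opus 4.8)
The plan is to prove the two implications of the stated equivalence separately: the convergence half by a first Borel--Cantelli argument built on a measure computation, and the divergence half by feeding a Duffin--Schaeffer-type quasi-independence estimate into the zero-one law already supplied by Theorem \ref{theorem 41}.

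For $q\in\mathbb{Z}\setminus\{0\}$ let $E_q$ be the set of $(x_1,\dots,x_m)\in\mathbb{I}^m$ for which there are integers $p_1,\dots,p_m$ with $\mbox{gcd}(q,p_1,\dots,p_m)=1$ and $\prod_{j=1}^m|qx_j+p_j|<\psi(q)$, so that ${\mathcal C}_{1,m}^{\times}(\psi)=\limsup_{|q|\to\infty}E_q$. The basic input is the estimate ${\mathcal M}_m(E_q)\asymp_m\psi(q)\log^{m-1}\frac1{\psi(q)}$: pushing Lebesgue measure on $\mathbb{I}$ forward under $x\mapsto\|qx\|$ gives the law $2\,\mathbf{1}_{[0,1/2]}\,du$, so the variant of $E_q$ without the coprimality condition has measure $\int_{[0,1/2]^m}2^m\mathbf{1}\{u_1\cdots u_m<\psi(q)\}\,du\asymp_m\psi(q)\log^{m-1}\frac1{\psi(q)}$ by the standard volume bound for a hyperbolic region in a box (the right-hand side being bounded below by a positive constant when $\psi(q)\ge\frac14$), and since for $m\ge2$ the residues $(p_1,\dots,p_m)$ coprime to $q$ have density $\prod_{z\mid q}(1-z^{-m})\in[\zeta(m)^{-1},1]$, imposing coprimality costs only a bounded factor. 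Summing over $q\in\mathbb{Z}\setminus\{0\}$ and applying the convergence Borel--Cantelli lemma gives ${\mathcal M}_m({\mathcal C}_{1,m}^{\times}(\psi))=0$ as soon as $\sum_q\psi(q)\log^{m-1}\frac1{\psi(q)}<\infty$.

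For the divergence half, Theorem \ref{theorem 41} already gives ${\mathcal M}_m({\mathcal C}_{1,m}^{\times}(\psi))\in\{0,1\}$, so it is enough to show that $\limsup_{|q|\to\infty}E_q$ has positive measure when $\sum_q{\mathcal M}_m(E_q)=\infty$. (If $\psi(q)$ stays above $\frac14$ along an infinite set of $q$ the sum diverges, and a short density argument --- again exploiting $m\ge2$ --- shows ${\mathcal C}_{1,m}^{\times}(\psi)$ has full measure; so one may assume $\psi(q)\le\frac14$ for all large $|q|$.) The natural tool is the Erd\H{o}s--G\'al--Koksma divergent Borel--Cantelli lemma, which reduces the problem to the quasi-independence-on-average bound
\[\sum_{1\le|q|,\,|q'|\le Q}{\mathcal M}_m(E_q\cap E_{q'})\ \ll_m\ \Big(\sum_{1\le|q|\le Q}{\mathcal M}_m(E_q)\Big)^2\]
for a sequence of $Q\to\infty$ (a logarithmic loss on the right being tolerable after a standard truncation). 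One would attack a single overlap ${\mathcal M}_m(E_q\cap E_{q'})$ by dyadically decomposing each of the $2m$ distances $|qx_j+p_j|$, $|q'x_j+p_j'|$, so that for each choice of scales the problem becomes the measure of an intersection of two systems of linear inequalities, coordinate by coordinate; in each coordinate the overlap of the two arithmetic progressions is governed by $\mbox{gcd}(q,q')$ exactly as in the classical Duffin--Schaeffer problem, and one then sums the dyadic contributions against the weights $\log^{m-1}\frac1\psi$.

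The main obstacle will be exactly this two-point estimate. Its $m=1$ simultaneous analogue is the Duffin--Schaeffer conjecture, proved only recently by Koukoulopoulos and Maynard by their GCD-graph method, and I would expect the argument here to need an anatomy-of-integers analysis of $\mbox{gcd}(q,q')$ of comparable strength; the hope is that the $\log^{m-1}$ weights, together with the extra room afforded by hyperbolic rather than cubic neighbourhoods, suffice to absorb the correlations that that machinery must otherwise fight. Granting the quasi-independence bound, Erd\H{o}s--G\'al--Koksma yields ${\mathcal M}_m(\limsup_{|q|\to\infty}E_q)>0$, whence Theorem \ref{theorem 41} gives full measure and the conjecture follows. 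A possible alternative is to fibre over $(x_2,\dots,x_m)$ via Theorem \ref{theorem 22}, reducing to a one-dimensional Duffin--Schaeffer problem for the data $q\mapsto\psi(q)\big/\prod_{j\ge2}\|qx_j\|$; but showing that the resulting sum diverges for almost every $(x_2,\dots,x_m)$ under the hypothesis $\sum_q\psi(q)\log^{m-1}\frac1{\psi(q)}=\infty$ seems to repackage the same difficulty rather than avoid it.
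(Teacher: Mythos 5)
This statement is labelled a \emph{Conjecture} in the paper precisely because no proof is known; the paper offers no argument for it, so there is nothing to compare your proposal against. Your sketch is a reasonable reconnaissance of the problem, and the convergence half is essentially sound: the measure computation ${\mathcal M}_m(E_q)\asymp_m\psi(q)\log^{m-1}\frac{1}{\psi(q)}$ via the hyperbolic volume and the density of coprime residue tuples is correct for $m\geq2$, and the convergence Borel--Cantelli lemma then gives measure zero. But you have not proved the divergence half, and you say so yourself: the quasi-independence-on-average estimate $\sum_{|q|,|q'|\leq Q}{\mathcal M}_m(E_q\cap E_{q'})\ll(\sum_{|q|\leq Q}{\mathcal M}_m(E_q))^2$ is left entirely unestablished. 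That estimate is the whole content of the conjecture; without it the Erd\H{o}s--G\'al--Koksma lemma has nothing to feed on, and Theorem \ref{theorem 41} only upgrades positive measure to full measure once positive measure is in hand.

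Be careful not to present this as a proof with a deferred lemma. The overlap bound you need sits at or beyond Duffin--Schaeffer depth: the global coprimality condition $\gcd(q,p_1,\ldots,p_m)=1$ couples the $m$ coordinates, so the two-system intersection does not cleanly factor coordinate-by-coordinate, and the dyadic decomposition you propose produces cross terms whose control is exactly the hard part. The alternative fibering route you mention (reduce to a one-dimensional problem with approximation function $\psi(q)/\prod_{j\geq2}\|qx_j\|$) also runs into the problem that one must show almost-sure divergence of the fibered sum, which is again a second-moment statement of the same difficulty, as you correctly observe. In short: the convergence direction of your proposal is fine; the divergence direction identifies the right strategy and the right obstacle, but does not close it, and the paper itself leaves the statement open.
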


\subsection{Inhomogeneous
approximation} Given
$\{\Psi_j:\mathbb{Z}^n\backslash\{\mathbf{0}\}\rightarrow\mathbb{R}^{+}\}_{j=1}^m$,
 $\mathbf{b}=(b_1,\ldots,b_m)\in\mathbb{R}^m$,  let ${\mathcal
A}_{n,m}^{\mathbf{b}}(\Psi_1,\ldots,\Psi_m)$,  ${\mathcal
B}_{n,m}^{\mathbf{b}}(\Psi_1,\ldots,\Psi_m)$,  ${\mathcal
C}_{n,m}^{\mathbf{b}}(\Psi_1,\ldots,\Psi_m)$ denote the sets of
$(\mathbf{X}^{(1)},\ldots,\mathbf{X}^{(m)})\in\mathbb{I}^{nm}$ for
which \[ |\mathbf{q}\mathbf{X}^{(j)}+p_j+b_j|<\Psi_j(\mathbf{q})\ \
\ (j=1,\ldots,m) \] holds for infinitely many pairs $\mathbf{q}\in
\mathbb{Z}^n\backslash\{\mathbf{0}\}$ and
$\mathbf{p}=(p_1,\ldots,p_m)\in\mathbb{Z}^m$ subject to respectively
1) free condition on $\mathbf{q}$ and $\mathbf{p}$; 2) local
coprimality condition on $\mathbf{q}$ and $p_j$ for each $j$; 3)
global coprimality condition on $\mathbf{q}$ and $\mathbf{p}$. In
view of quite a few examples in \cite{B,BBV,BBDV,BVV,BV3}, it is
reasonable to raise the following question.

\begin{Question} Is it true that  ${\mathcal
M}_{nm}({\mathcal F}_{n,m}^{\mathbf{b}}(\Psi_1,\ldots,\Psi_m))$ is
independent of $\mathbf{b}\in\mathbb{R}^m$?
\end{Question}

In much the same way, one can first introduce  ${\mathcal
A}_{n,m}^{\mathbf{b},\times}(\Psi)$, ${\mathcal
B}_{n,m}^{\mathbf{b},\times}(\Psi)$ and ${\mathcal
C}_{n,m}^{\mathbf{b},\times}(\Psi)$, then propose a similar
question.

\begin{Question} Is it true that  ${\mathcal
M}_{nm}({\mathcal F}_{n,m}^{\mathbf{b},\times}(\Psi))$ is
independent of $\mathbf{b}\in\mathbb{R}^m$?
\end{Question}

\end{document}